\theoremstyle{plain}
\newtheorem{theorem}{Theorem}[section]
\newtheorem{proposition}[theorem]{Proposition}
\newtheorem{lemma}[theorem]{Lemma}
\newtheorem{remark}[theorem]{Remark}
\newcommand{\R}{\mathbb{R}}
\newcommand{\Rd}{\mathbb{R}_*^d}
\newcommand{\N}{\mathbb{N}}
\newcommand{\Sphere}{\mathbb{S}}
\newcommand{\CC}{\mathcal{C}}
\newcommand{\bx}{\bar v}
\newcommand{\by}{\bar v_*}
\newcommand{\ba}{\bar a}
\newcommand{\pa}{\partial}
\DeclareMathOperator{\tr}{tr}
\DeclareMathOperator{\diverg}{div}
\DeclareMathOperator{\rot}{curl}
\DeclareMathOperator{\supp}{supp}
\DeclareMathOperator{\rank}{rank}
\DeclareMathOperator{\dist}{dist}
\DeclareMathOperator{\argsinh}{argsinh}
  \ifodd\value{page}\relax
\title{Rigorous study of the equilibria of  collision kernels appearing in the theory of  weak turbulence}
\author{M. BREDEN$^a$ and L. DESVILLETTES$^b$ \\
\it $^a$ Technical University of Munich, \\ 
\it Faculty of Mathematics, Research Unit ``Multiscale and Stochastic 
Dynamics", \\
\it 85748 Garching b. M\"unchen, Germany, \\
and\\
\it $^b$ Universit\'{e} Paris Diderot, Sorbonne Paris Cit\'{e},\\
 \it Institut de Math\'{e}matiques de Jussieu-Paris Rive Gauche, UMR 7586, CNRS,\\ \it Sorbonne Universit\'{e}s, UPMC Univ. Paris 06, F-75013, Paris, France.} 
\date{ }
\begin{document}

\maketitle

\begin{abstract} In this paper, we rigorously obtain all the equilibria of collision kernels of type ``two particles give two particles'' appearing in weak turbulence theory under very general assumptions, thus completing the ``equality case'' in Boltzmann's H-theorem for those models. We also provide some rigorous results for collision kernels of type ``two particles give one particle'', under assumptions which include some of the most classical kernels of this type. The method of proof is inspired by the quantitative estimates obtained for the Landau equation in \cite{landau_jfa}.
\end{abstract}

\bigskip 

\bigskip 

\noindent
{\bf Keys words:}  weak turbulence, collision kernels, Boltzmann's H-theorem, equilibria\\

\noindent
{\bf Mathematical Subject Classification:} 76F99, 76P05, 82C40
 
\section{Introduction }\label{intro}

Rarefied gases are described by their density $f(t,x,v)\ge 0$ in phase space, which satisfies the Boltzmann equation (cf. \cite{cercignani})
\begin{equation}
\label{eq:Boltzmann}
\pa_t f + v \cdot \nabla_x f = Q(f),
\end{equation}
where $t$ is the time, $x$ the position, and $v$ the velocity of a given particle. The operator $Q$, acting only on the velocity variable of $f$, is defined as 
 \begin{equation}
 Q(f)(v) = \int_{\R^3}\int_{\Sphere^2} \bigg( f(\frac{v+v_*}2 + \frac{|v-v_*|}2\,\sigma)\, f(\frac{v+v_*}2 - \frac{|v-v_*|}2\,\sigma)
 \end{equation}
 $$ - f(v) f(v_*) \bigg) \, B(|v-v_*|, \frac{v-v_*}{|v-v_*|}\cdot\sigma )\, d\sigma dv_*, $$
where $B$ is related to the cross section of the binary collision process between the molecules of the gas. 
\medskip

Boltzmann's H-theorem states that (for all functions $f$ for which the quantity makes sense) $\int Q(f) \, \ln f \,dv \le 0$, and that 
\begin{equation}
 (\forall v , \, Q(f)(v) = 0 ) \qquad \iff \qquad  \int Q(f) \, \ln f \,dv = 0 \qquad \iff \qquad \ln f \in {\hbox{ Vect }} (1, v_i, |v|^2).
\end{equation}
In other terms, the equilibria of the Boltzmann equation are the Maxwellian functions of $v$. 
\medskip 

In order to make the above statement completely rigorous, one needs to assume that $B>0$ (or at least that $B>0$ on some suitable set), and that $f$ is regular enough for the quantities to make sense (at least a.e.). 
\medskip

For many cross sections, it is in fact possible to transform this statement in an inequality between the entropy $\int f\,\ln f dv$ and the entropy dissipation $ \int Q(f) \, \ln f \,dv $. We refer to \cite{survey} and  
the references therein for results and conjectures in this direction.
\medskip

Boltzmann's H-theorem is based on the computation
$$\int  Q(f)(v)\, \ln f(v)\, dv  = -\frac14\int_{\R^3} \int_{\R^3}\int_{\Sphere^2} \bigg( f(\frac{v+v_*}2 + \frac{|v-v_*|}2\,\sigma)\, f(\frac{v+v_*}2 - \frac{|v-v_*|}2\,\sigma) - f(v) f(v_*) \bigg) $$
\begin{equation}
\,\times \, \bigg( \ln( f(\frac{v+v_*}2 + \frac{|v-v_*|}2\,\sigma)\, f(\frac{v+v_*}2 - \frac{|v-v_*|}2\,\sigma) ) 
- \ln ( f(v)\,  f(v_*) ) \bigg)\, B(|v-v_*|, \frac{v-v_*}{|v-v_*|}\cdot\sigma )\, d\sigma dv_* dv,
\end{equation}
so that (provided that $B>0$), it amounts to say that if for all $v,v_* \in \R^3$, $\sigma \in \Sphere^2$,
\begin{equation} \label{hbsig}
 f(\frac{v+v_*}2 + \frac{|v-v_*|}2\,\sigma)\, f(\frac{v+v_*}2 - \frac{|v-v_*|}2\,\sigma)  =  f(v)\,  f(v_*) ,
\end{equation}
then for some $a\in\R$, $b\in \R^3$, $c>0$ 
\begin{equation}
\forall v \in \R^3, \qquad \ln f(v) = a + b \cdot v + c \, |v|^2 . 
\end{equation}
In this formulation, ``for all'' is to be replaced by ``for almost all'' if the function $f$ is not assumed to be continuous.
Noticing that $\bigg(\frac{v+v_*}2 + \frac{|v-v_*|}2\,\sigma, \frac{v+v_*}2 - \frac{|v-v_*|}2\,\sigma \bigg)$ is a parametrization of the set of $(v', v'_*)$ such that $v'+ v'_* = v + v_*$ and
$ |v|^2 + |v_*|^2 = |v'|^2 + |v_*'|^2$,  the assumption (\ref{hbsig})
can be replaced by: for all $v,v_*,v', v'_*\in\R^3$ such that 
\begin{equation} \label{hbs}
 v'+ v'_* = v + v_*, \qquad  |v|^2 + |v_*|^2 = |v'|^2 + |v_*'|^2,
\end{equation}
one assumes that
\begin{equation} \label{hbsig2}
 f(v')\, f(v'_*)  =  f(v)\,  f(v_*) .
\end{equation}
Note also that provided that $f>0$ a.e., denoting by $g =\ln f$, 
Boltzmann's H-theorem  amounts to proving that
the functions $g$ satisfying 
\begin{equation} \label{hbsigl2}
 g(v') + g(v'_*)  =  g(v) + g(v_*) 
\end{equation}
for all $v,v_*,v', v'_*\in\R^3$ such that (\ref{hbs}) holds
are exactly those of the form
\begin{equation}
g(v) = a + b \cdot v + c \, |v|^2 ,  
\end{equation}
where $a,c\in\R$, $b\in \R^3$ ($c>0$ if $g$ is assumed to be tending to $-\infty$ when $|v| \to +\infty$).
This last formulation~\eqref{hbsigl2} naturally appears when one considers the 
linearized Boltzmann kernel (around $M(v) := (2\pi)^{-3/2} \exp \bigg(- \frac{|v|^2}{2} \bigg)$): 
\begin{equation}
\label{eq:boltz_lin}
 Lg(v) = \int_{\R^3}\int_{\Sphere^2} M(v_*)\, \bigg\{ g(v'_*) + g(v') - g(v_*) - g(v) \bigg\}\, B\,d\sigma dv_*  ,  
\end{equation}
and the quantity
\begin{equation}
\label{eq:boltz_lin_int}
 \int_{\R^3}Lg(v)\, g(v)\, M(v) dv = -\frac14 \int_{\R^3}\int_{\R^3}\int_{\Sphere^2} M(v) M(v_*)\, \bigg[ g(v'_*) + g(v') - g(v_*) - g(v) \bigg]^2 \, B\, d\sigma dv_* dv .  
\end{equation}
It is therefore natural to ask that the formulation (\ref{hbsigl2}) holds in a space of functions $g$ which includes $L^2(\R^3; M\,dv)$.
Note that it is indeed the case, and that for all classical cross sections $B$, it can in fact be shown a quantitative version of the (linearized) H-theorem, enabling in some cases (for example hard spheres) to get a spectral gap (cf. \cite{baranger_mouhot}). 

We point out that the formulation~\eqref{hbsigl2} is also related to the equilibria of the quantum Boltzmann equation (see e.g.~\cite{EscMisVal03}). Indeed, in such cases~\eqref{hbsig2} has to be replaced by a more complicated expression of the form
\begin{equation*}
 f(v')\, f(v'_*)\, (1+\varepsilon f(v))\, (1+\varepsilon f(v_*))  =  f(v)\,  f(v_*)\, (1+\varepsilon f(v'))\, (1+\varepsilon f(v'_*)),
\end{equation*}
where $\varepsilon=-1$ and $\varepsilon=1$ respectively  correspond to Fermi-Dirac and Bose-Einstein statistics. By considering $g=\ln\frac{f}{1+\varepsilon f}$, we are indeed again brought back to~\eqref{hbsigl2}.
\medskip

In this paper, we are interested in situations where the conservation of energy $ |v|^2 + |v_*|^2 = |v'|^2 + |v_*'|^2$ 
is replaced by a more general relation
\begin{equation}
  \omega(v') + \omega(v_*') = \omega(v) + \omega(v_*),  
\end{equation}
where $\omega$ is a prescribed function from 
$\R^d$ to $\R$.
\medskip

Such a situation naturally appears when one considers the relativistic Boltzmann equation, with $\omega(p) = \sqrt{1+|p|^2}$ (where $p$ is the momentum). We refer to \cite{cercignani_rel} and \cite{strain} for details on the modeling 
of collisions in such a setting.
\medskip

It also appears when one looks at the four waves equation appearing in weak turbulence theory (cf. \cite{ZakLvoFal12}). In this theory, the operator modeling the interaction 
between four waves writes
$$ Q_W(f) (v)=\int W(v,v_*,v',v_*')\, \bigg[f(v')\,f(v_*')\,(f(v)+f(v_*))-f(v)\,f(v_*)\,(f(v')+f(v_*')) \,\bigg]  $$
\begin{equation} \label{opW}
\times\, \delta_{\{v+v_*=v'+v_*'\}}
\,\delta_{\{\omega(v)+\omega(v_*)=\omega(v')+\omega(v_*')\}}\,
 dv_*dv_*'dv'.
\end{equation}
A precise meaning can be given to the Dirac functions used above (cf. \cite{germ}), and $W$ plays the role of $B$ in the Boltzmann equation. In this context the variable $v$ represents a wave vector, which is more usually denoted $k$ or $p$, however to keep a uniform notation throughout the paper we stick with the letter $v$.
 \par
 A typical value for $\omega$ is  $\omega(v)=C\, |v|^{\alpha}$, for $0<\alpha<1$ and $C>0$. In particular, in the two-dimensional case $d=2$,  $\omega(v)=C\sqrt{|v|}$ is used to describe gravitational waves on a fluid surface (see~\cite{ZakLvoFal12}).
\medskip

An H-theorem also holds for four waves kinetic equations, with the entropy $\int \ln f dv$ and the entropy production $ \int Q_W(f) \, f^{-1} \,dv $. Indeed, one can compute
$$\int  Q_W(f) (v)\, f^{-1}(v)\, dv =\frac{1}{4}\int W(v,v_*,v',v_*')\, \bigg[f^{-1}(v) + f^{-1}(v_*) -f^{-1}(v') -f^{-1}(v_*')\bigg]^2  $$
\begin{equation} \label{opWDE}
\times\, f(v)f(v_*)f(v')f(v_*')\delta_{\{v+v_*=v'+v_*'\}}
\,\delta_{\{\omega(v)+\omega(v_*)=\omega(v')+\omega(v_*')\}}\,
 dvdv_*dv_*'dv'.
\end{equation}
When $W>0$, denoting $g = f^{-1}$, we see that the (strictly positive) continuous equilibria of $Q_W$ are the function $g$ satisfying
\begin{equation} \label{cup}
 g(v') + g(v_*')  =  g(v) + g(v_*) 
\end{equation}
for all $v,v_*,v', v_*'\in\R^d$ such that 
\begin{equation} \label{cus} 
 v+v_*=v'+v_*', \qquad 
\omega(v)+\omega(v_*)=\omega(v')+\omega(v_*') . 
\end{equation}
If one looks for equilibria which are $L^1_{loc}$ (and a.e. strictly positive), then (\ref{cup}) has to be satisfied for a.e.  $v,v_*,v', v_*'\in\R^d$ such that (\ref{cus}) holds.

It is obvious that all functions $g$ of the form
\begin{equation} \label{ffom}
g(v) = a + b \cdot v + c \, \omega(v) ,  
\end{equation}
satisfy (\ref{cup}). Our goal is to prove the reciprocal of this statement. In order to keep a coherence with the linearized theory, we 
wish to have a proof which holds when $g$ is only supposed to belong to $L^2$ (possibly with a weight). In practice, we shall show the result when $g \in L^1_{loc}$.
\medskip

We intend to impose as few assumptions as possible on $\omega$. Indeed, as previously stated, typical $\omega$ appearing in weak turbulence theory can be singular at point $0$. Denoting here (and for the rest of the paper) $\Rd := \R^d\setminus\{0\}$, we assume therefore that $\omega \in 
\CC^2(\Rd,\R)$. 
\medskip

We can now present the first version of our main Theorem, corresponding to the case when $g$ is assumed to have some smoothness.
\medskip

\begin{theorem}
\label{th:22}
Let $d\in\{2,3\}$ and $\omega\in\CC^2(\Rd,\R)$. Assume that there exists $i,j\in\{1,\ldots,d\}$, $i\neq j$, such that
\begin{equation}
\label{hyp:linear_indep}
\left\{1,\partial_i\omega,\partial_j\omega\right\} \ \text{are linearly independant in }\CC^1(\Rd).
\end{equation} 
Let $g\in \CC^1(\Rd,\R)$ satisfying~\eqref{cup}, \eqref{cus}. 
\par 
Then, there exist $a,c\in\R$ and $b\in\R^d$ such that, for all $v$ in $\Rd$,
\begin{equation*}
g(v)=a+b\cdot v +c\,\omega(v).
\end{equation*}
\end{theorem}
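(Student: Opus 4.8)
The plan is to differentiate the functional equation~\eqref{cup} along the energy-conserving constraint surface, thereby converting it into a pointwise identity between $\nabla g$ and $\nabla\omega$, and then to use hypothesis~\eqref{hyp:linear_indep} to force the proportionality factor appearing in that identity to be a constant. Finally, integrating that identity yields the desired form of $g$.

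\textbf{Differentiation step.} Fix $v,v_*\in\Rd$ with $\nabla\omega(v)\neq\nabla\omega(v_*)$, set $S:=v+v_*$ and $E:=\omega(v)+\omega(v_*)$, and consider $\psi(v'):=\omega(v')+\omega(S-v')$, which is $\CC^1$ near $v'=v$ since both $v$ and $v_*=S-v$ are nonzero. As $\nabla\psi(v)=\nabla\omega(v)-\nabla\omega(v_*)\neq0$, the implicit function theorem makes $\{\psi=E\}$ a $\CC^1$ hypersurface through $v$. For $v'$ on this hypersurface the pair $(v',S-v')$ satisfies both relations in~\eqref{cus} relative to $(v,v_*)$, so~\eqref{cup} forces $v'\mapsto g(v')+g(S-v')$ to be constant on it. Hence its gradient at $v$, namely $\nabla g(v)-\nabla g(v_*)$, is orthogonal to the tangent space $\nabla\psi(v)^\perp$, thus parallel to $\nabla\psi(v)$. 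This produces a scalar $\lambda(v,v_*)$, symmetric by antisymmetry of both sides, with
\[
\nabla g(v)-\nabla g(v_*)=\lambda(v,v_*)\,\big(\nabla\omega(v)-\nabla\omega(v_*)\big),
\]
valid whenever $\nabla\omega(v)\neq\nabla\omega(v_*)$.

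\textbf{Rigidity of $\lambda$.} For a triple $v,v_*,w$ with pairwise distinct gradients, adding the identity for $(v,v_*)$ to that for $(v_*,w)$ and comparing with the one for $(v,w)$ gives a linear relation $a_1\nabla\omega(v)+a_2\nabla\omega(v_*)+a_3\nabla\omega(w)=0$ with $a_1+a_2+a_3=0$, where $a_1=\lambda(v,v_*)-\lambda(v,w)$, and so on. Reading off the $i$-th and $j$-th components and appending the sum-zero relation yields the $3\times3$ homogeneous system whose matrix has rows $(1,1,1)$, $(\partial_i\omega(v),\partial_i\omega(v_*),\partial_i\omega(w))$ and $(\partial_j\omega(v),\partial_j\omega(v_*),\partial_j\omega(w))$. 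Now hypothesis~\eqref{hyp:linear_indep} says exactly that $\Phi:=(\partial_i\omega,\partial_j\omega):\Rd\to\R^2$ has image not contained in any affine line; hence there are points making $\Phi(v),\Phi(v_*),\Phi(w)$ affinely independent, i.e. the matrix invertible, which forces $a_1=a_2=a_3=0$, that is $\lambda(v,v_*)=\lambda(v_*,w)=\lambda(v,w)$. Fixing $v$ and joining any two admissible points through an auxiliary $u$ whose image $\Phi(u)$ lies off the relevant line (possible precisely because the image is not a line) propagates this equality, so that $\lambda$ equals a single constant $c\in\R$; the degenerate locus $\{\nabla\omega(v)=\nabla\omega(v_*)\}$ causes no gap, since at least one of $\partial_i\omega,\partial_j\omega$ is non-constant and the admissible pairs are therefore dense.

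\textbf{Integration.} With $\lambda\equiv c$, the identity $\nabla g(v)-\nabla g(v_*)=c\big(\nabla\omega(v)-\nabla\omega(v_*)\big)$ holds on the dense set of admissible pairs and hence everywhere by continuity. Thus $\nabla(g-c\,\omega)$ is independent of the point, equal to some $b\in\R^d$, and since $\Rd$ is connected for $d\ge2$ this integrates to $g=a+b\cdot v+c\,\omega$ for some $a\in\R$.

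\emph{Main obstacle.} The delicate points are the rigorous differentiation in the first step (keeping the base point and its companion in $\Rd$, and ensuring $\{\psi=E\}$ is genuinely a hypersurface, which requires $\nabla\omega(v)\neq\nabla\omega(v_*)$ and $d\ge2$), and, above all, the global propagation of the triangle identity into honest constancy of $\lambda$: one must handle the configurations where the triangle degenerates or where the two gradients coincide. This is exactly the place where hypothesis~\eqref{hyp:linear_indep} is indispensable, guaranteeing a supply of nondegenerate triples to link arbitrary pairs.
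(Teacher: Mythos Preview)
Your proof is correct and takes a genuinely different route from the paper's. Both arguments begin identically, differentiating along the constraint hypersurface to obtain the parallelism $(\nabla g(v)-\nabla g(v_*)) \parallel (\nabla\omega(v)-\nabla\omega(v_*))$ on the set $A=\{\nabla\omega(v)\neq\nabla\omega(v_*)\}$ (the paper records this as the cross-product identity~\eqref{stf}). From there the paper proceeds by \emph{integration}: it tests the identity against $\alpha(v)\beta(v_*)$, then against $\alpha(v)\beta(v_*)\partial_i\omega(v_*)$ and $\alpha(v)\beta(v_*)\partial_j\omega(v_*)$, obtaining a $3\times3$ linear system whose coefficient matrix is the Gram matrix of $(1,\partial_i\omega,\partial_j\omega)$ in $L^2(\beta\,dv_*)$; hypothesis~\eqref{hyp:linear_indep} together with Lemma~\ref{lem:indep_compact} makes this invertible, Cramer's rule gives $\partial_i g,\partial_j g$ as affine combinations of $\partial_i\omega,\partial_j\omega$, and a second pass through the pointwise identity plus Lemma~\ref{lem:indep} kills the off-diagonal coefficients. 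Your three-point cocycle argument bypasses all of this: you extract the scalar $\lambda(v,v_*)$ directly and force it to be constant via a pointwise $3\times3$ determinant, using~\eqref{hyp:linear_indep} only to supply triples with affinely independent $\Phi$-images. Your route is shorter and more transparent in the $\CC^1$ setting of Theorem~\ref{th:22}; the paper's integral approach is deliberately chosen because it transfers verbatim to the $L^1_{loc}$ setting of Theorem~\ref{th:22version2}, where a pointwise $\lambda$ is not available. One small point worth writing out in your propagation step: for $d=3$ one can have admissible pairs with $\Phi(v)=\Phi(v_*)$ (the gradients differing only in the third coordinate), so the determinant degenerates for every choice of third point; this is handled not by the triangle directly but by subtracting two instances of the identity that share a common endpoint $w$ with $\Phi(w)\neq\Phi(v)$.
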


However, we wish to relax the smoothness assumption for  $g$  in the previous statement, in order to have a consistency with the setting naturally appearing in linearized formulations of the collision kernels. Unfortunately, assumption (\ref{cup}), (\ref{cus}) is not directly adapted to treat 
functions defined up to their value on a zero-measure set. We therefore first state a lemma enabling to define properly the problem when $g \in L^1_{loc}(\R^d)$:

\begin{lemma}
\label{lem:para_var} 
Let $d\geq 2$ and $\omega\in\CC^2(\Rd,\R)$. We define
\begin{equation}\label{defa}
A := \left\{(v,v_*)\in \left(\Rd\right)^2,\ \nabla\omega(v) \neq \nabla\omega(v_*)\right\}.
\end{equation}
For every $(\bx,\by)\in A$, there exists a bounded neighborhood $U\subset A$ of $(\bx,\by)$, a neighborhood $V\subset\R^{d-1}$ of $0$ and a function $\psi=\psi(v,v_*,\sigma)\in\CC^2(\R^d\times\R^d\times\R^{d-1},\R^d)$ such that, for all $(v,v_*)$ in $U$ and all $\sigma$ in $V$,
\begin{equation}
\label{eq:def_psi}
\psi(v,v_*,0)=0 \quad\text{and}\quad \omega(v)+\omega(v_*)=\omega(v-\psi(v,v_*,\sigma))+\omega(v_*+\psi(v,v_*,\sigma)),
\end{equation}
and such that, for all $(v,v_*)$ in $U$,
\begin{equation}
\label{eq:psi_subm}
\rank(D_\sigma\psi(v,v_*,0))=d-1.
\end{equation}
\end{lemma}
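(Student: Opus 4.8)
The plan is to read $\psi$ as a local parametrization of the energy--conservation constraint produced by the implicit function theorem, and then to globalize it by a cutoff. First I would introduce the $\CC^2$ function
$$F(v,v_*,p) := \omega(v-p) + \omega(v_*+p) - \omega(v) - \omega(v_*), \qquad p \in \R^d,$$
which satisfies $F(v,v_*,0)=0$ identically and encodes the desired identity in~\eqref{eq:def_psi} through the single equation $F(v,v_*,\psi)=0$ (conservation of momentum being automatic in the ansatz $v\mapsto v-\psi$, $v_*\mapsto v_*+\psi$). The gradient in the transfer variable is
$$\nabla_p F(v,v_*,0) = \nabla\omega(v_*) - \nabla\omega(v),$$
which is exactly the vector that is nonzero precisely on $A$; at $(\bx,\by)\in A$ it equals some $\xi\neq 0$.

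Next I would pick an index $k$ with $\xi_k\neq 0$ and split the transfer variable as $p = L\sigma + t\,e_k$, where $e_k$ is the $k$-th basis vector and $L\colon\R^{d-1}\to\R^d$ is the linear isomorphism onto the hyperplane $\{p_k=0\}$. Applying the implicit function theorem to $(\sigma,t)\mapsto F(v,v_*,L\sigma+t\,e_k)$ at $(\bx,\by,0,0)$, whose $t$-derivative there equals $\xi_k\neq 0$, produces a $\CC^2$ function $t=h(v,v_*,\sigma)$ on a neighborhood of $(\bx,\by,0)$ solving $F(v,v_*,L\sigma+h(v,v_*,\sigma)\,e_k)=0$. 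Because $F(v,v_*,0)=0$ for \emph{every} $(v,v_*)$, the uniqueness clause of the implicit function theorem forces $h(v,v_*,0)=0$ throughout a neighborhood of $(\bx,\by)$, not merely at that point. I would then set $\psi(v,v_*,\sigma):=L\sigma+h(v,v_*,\sigma)\,e_k$, which is $\CC^2$ and satisfies $\psi(v,v_*,0)=0$ together with~\eqref{eq:def_psi} by construction. For~\eqref{eq:psi_subm}, composing $D_\sigma\psi(v,v_*,0)$ with the projection along $e_k$ onto $\{p_k=0\}$ returns $L$, since $L$ maps into that hyperplane and the $e_k$-contribution $e_k\,(\nabla_\sigma h)^\intercal$ is annihilated; hence the $d\times(d-1)$ matrix $D_\sigma\psi(v,v_*,0)$ has rank at least $d-1$, thus exactly $d-1$.

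Finally, since the implicit function theorem only yields $\psi$ on some open neighborhood $\mathcal{O}$ of $(\bx,\by,0)$, I would globalize it. Using that $A$ is open (as the preimage of $\R^d\setminus\{0\}$ under the continuous map $(v,v_*)\mapsto\nabla\omega(v)-\nabla\omega(v_*)$), choose a bounded open $U$ and an open $V\subset\R^{d-1}$ with $(\bx,\by)\in U\subset A$, $0\in V$, and $\overline{U\times V}\subset\mathcal{O}$. Take $\chi\in\CC^\infty$ equal to $1$ on $U\times V$ and compactly supported in $\mathcal{O}$, and replace $\psi$ by $\chi\,\psi$ extended by zero outside $\supp\chi$. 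This extension belongs to $\CC^2(\R^d\times\R^d\times\R^{d-1},\R^d)$ and coincides with the local $\psi$ on $U\times V$, so all of $\psi(v,v_*,0)=0$, the identity~\eqref{eq:def_psi}, and the rank condition~\eqref{eq:psi_subm} persist there.

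I expect the genuinely delicate points to be bookkeeping rather than conceptual: propagating $h(v,v_*,0)=0$ to all of $U$ via the uniqueness part of the implicit function theorem, so that $\psi(\cdot,\cdot,0)\equiv 0$ holds on the whole neighborhood and not just at the base point, and carrying out the cutoff so that the prescribed identity is left intact on $U\times V$ while $\psi$ becomes globally $\CC^2$. The rank computation and the openness of $A$ follow immediately from the continuity of $\nabla\omega$.
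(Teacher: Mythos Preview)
Your proposal is correct and follows essentially the same route as the paper: both apply the implicit function theorem to the constraint function $F(v,v_*,p)=\omega(v-p)+\omega(v_*+p)-\omega(v)-\omega(v_*)$ at $p=0$, solve for one coordinate of $p$ in terms of the remaining $d-1$ coordinates (your $L\sigma$ is exactly the paper's $z_{i'}$), and assemble $\psi$ from this data. You are simply more explicit than the paper on three points it treats as routine---the uniqueness argument giving $h(v,v_*,0)=0$, the rank computation via projection onto $\{p_k=0\}$, and the cutoff needed to upgrade the local $\psi$ to a globally $\CC^2$ map as the statement requires.
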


We can now state the second version of our theorem, where $g$ is only assumed to be locally integrable. 

\begin{theorem}
\label{th:22version2}
Let $d\in\{2,3\}$ and $\omega\in\CC^2(\Rd,\R)$. Assume that there exists $i,j\in\{1,\ldots,d\}$, $i\neq j$, such that
\begin{equation}
\label{hyp:linear_indep_rep}
\left\{1,\partial_i\omega,\partial_j\omega\right\} \ \text{are linearly independant in }\CC^1(\Rd).
\end{equation} 
Let $g\in L^1_{loc}(\R^d,\R)$ such that for  all $(\bx,\by)\in A$, for almost every $(v,v_*)$ in $U$ and almost every $\sigma\in V$,
\begin{equation} \label{cond:22}
g(v)+g(v_*)=g(v-\psi(v,v_*,\sigma))+g(v_*+\psi(v,v_*,\sigma)),
\end{equation}
where $A$, $U$, $V$ and $\psi$ are defined in Lemma~\ref{lem:para_var}. Assume also that the boundary of $A$ in $\left(\Rd\right)^2$ is of Lebesgue measure zero (we recall that $A$ is defined by (\ref{defa})).
\par 
 Then, there exist $a,c\in\R$ and $b\in\R^d$ such that, for a.e. $v$ in $\Rd$,
\begin{equation*}
g(v)=a+b\cdot v +c\,\omega(v).
\end{equation*}
\end{theorem}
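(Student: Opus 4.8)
The plan is to reduce Theorem~\ref{th:22version2} to the already-established smooth case, Theorem~\ref{th:22}, by first proving the \emph{regularity} statement that any $L^1_{loc}$ solution of~\eqref{cond:22} coincides almost everywhere with a function of class $\CC^\infty$. The mechanism is a regularization tailored to the geometry of Lemma~\ref{lem:para_var}: using the parametrization $\psi$ I rewrite~\eqref{cond:22} as a formula expressing the value $g(v)$ as an average of the values of $g$ over the collision configurations issued from $v$, and I then recognize those averages as convolutions of $g$ against smooth kernels. Once $g$ is known to be smooth (after modification on a null set), the hypotheses of Theorem~\ref{th:22} are met and the desired form $a+b\cdot v+c\,\omega$ follows.

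Concretely, I fix $(\bx,\by)\in A$ together with the neighbourhoods $U$, $V$ and the map $\psi$ furnished by Lemma~\ref{lem:para_var}. By Fubini, \eqref{cond:22} holds, for almost every $v$ near $\bx$, for almost every $(v_*,\sigma)$ near $(\by,0)$. I pick a nonnegative weight $\theta(v_*,\sigma)\in\CC^\infty_c$, supported in the chart and with $\int\theta\,dv_*\,d\sigma=1$, and integrate~\eqref{cond:22} against $\theta$ in the variables $(v_*,\sigma)$ only. Since $\int\theta=1$, the term $g(v)$ survives unchanged while $g(v_*)$ produces a constant, giving
\[
g(v) = \int \big[g(v-\psi(v,v_*,\sigma)) + g(v_*+\psi(v,v_*,\sigma))\big]\,\theta(v_*,\sigma)\,dv_*\,d\sigma \;-\; \int g(v_*)\,\theta(v_*,\sigma)\,dv_*\,d\sigma.
\]
For the contribution of $g(v_*+\psi)$ the change of variables $v_*\mapsto v_*+\psi(v,v_*,\sigma)$ is, for $\sigma$ small, a diffeomorphism (because $\psi(\cdot,\cdot,0)=0$ forces $D_{v_*}\psi(\cdot,\cdot,0)=0$), which turns this term into $\int g(w)\,K_1(v,w)\,dw$ with $K_1$ smooth in $v$. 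For the contribution of $g(v-\psi)$ I claim the map $(v_*,\sigma)\mapsto v-\psi(v,v_*,\sigma)$ is a submersion once $\sigma$ is bounded away from $0$: differentiating~\eqref{eq:def_psi} shows the columns of $D_\sigma\psi(v,v_*,0)$ span $(\nabla\omega(v_*)-\nabla\omega(v))^\perp$, accounting for rank $d-1$ via~\eqref{eq:psi_subm}, while varying $v_*$ deforms the energy shell and supplies the one missing transverse direction (this is where~\eqref{hyp:linear_indep_rep}, through the non-constancy of $\nabla\omega$, enters). Choosing $\theta$ supported away from $\sigma=0$ inside the chart, the coarea formula writes this term as $\int g(w)\,K_2(v,w)\,dw$ with $K_2$ again smooth in $v$. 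Thus $g$ equals, a.e. on a neighbourhood of $\bx$, a function that is $\CC^\infty$ in $v$.

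Because $\nabla\omega$ is non-constant, every $v\in\Rd$ possesses a point $\by$ with $(v,\by)\in A$, so the local representation covers all of $\Rd$; overlapping local smooth representatives agree (each equals $g$ a.e.) and glue into a single $\tilde g\in\CC^\infty(\Rd)$ with $\tilde g=g$ a.e. It remains to feed $\tilde g$ into Theorem~\ref{th:22}. Since $\tilde g$ is continuous and the data~\eqref{cond:22} is available on the open set $A$, the relation~\eqref{cond:22} holds for $\tilde g$ at \emph{every} point of each chart by continuity; here the assumption that $\partial A$ is Lebesgue-null is what lets one pass the almost-everywhere identity across the degeneracy locus $\{\nabla\omega(v)=\nabla\omega(v_*)\}$ up to the boundary of $A$, and thereby recover the global relations~\eqref{cup}--\eqref{cus} for the continuous $\tilde g$. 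Theorem~\ref{th:22} then yields $a,c\in\R$ and $b\in\R^d$ with $\tilde g=a+b\cdot v+c\,\omega$, whence $g(v)=a+b\cdot v+c\,\omega(v)$ for a.e.\ $v\in\Rd$.

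The main obstacle is the regularity step, and within it the submersion property of $(v_*,\sigma)\mapsto v-\psi(v,v_*,\sigma)$: one must show that the $d-1$ tangential directions coming from~\eqref{eq:psi_subm} together with the deformation of the energy shell under $v_*$ genuinely span $\R^d$, and one must carefully excise the degenerate locus $\sigma=0$ (where $\psi$ and $D_{v_*}\psi$ both vanish, so the map drops to rank $d-1$) by localizing the weight $\theta$ away from it. Everything else---the diffeomorphism change of variables, the smoothness of the kernels $K_1,K_2$, and the gluing---is routine once this transversality is in hand, and the appeal to Theorem~\ref{th:22} disposes of the algebraic part of the argument.
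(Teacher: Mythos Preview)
Your strategy is genuinely different from the paper's and is, in outline, a recognizable route to such results: bootstrap regularity by representing $g(v)$ as an average over collision configurations, identify the average as an integral operator with smooth kernel, and then invoke the smooth case. The paper instead never attempts to upgrade the regularity of $g$ directly. It first proves, purely in the sense of distributions, the weak Landau-type identity
\[
(\nabla g(v)-\nabla g(v_*))\times(\nabla\omega(v)-\nabla\omega(v_*))=0,
\]
by differentiating~\eqref{cond:22} once in $\sigma$ after a change of variables, and then tests this identity against tensorized functions $\varphi(v,v_*)=\alpha(v)\beta(v_*)$ (and $\beta\partial_i\omega$, $\beta\partial_j\omega$) to produce a $3\times 3$ linear system whose matrix is the Gram matrix of $\{1,\partial_i\omega,\partial_j\omega\}$ against a weight. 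Hypothesis~\eqref{hyp:linear_indep_rep} enters exactly as the invertibility of this Gram matrix; Cramer's rule then gives $\partial_i g$ and $\partial_j g$ as explicit linear combinations of $1,\partial_i\omega,\partial_j\omega$, which simultaneously yields $g\in\CC^1$ and determines its gradient. The measure-zero assumption on $\partial A$ is used at a precise point: to extend the weak Landau identity from test functions supported in $A$ to all of $(\Rd)^2$, via a cutoff argument exploiting $\Vert\nabla\omega(v)-\nabla\omega(v_*)\Vert=O(\mathrm{dist}((v,v_*),A^c))$.

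Your proposal has a real gap at precisely the step you label ``the main obstacle''. Differentiating the constraint~\eqref{eq:def_psi} in $v_*$ shows that $(v_*,\sigma)\mapsto v-\psi(v,v_*,\sigma)$ fails to be a submersion exactly when $\nabla\omega(v_*+\psi)=\nabla\omega(v_*)$, i.e.\ when the post-collisional pair $(v_*,v'_*)$ lies in $A^c$. For small nonzero $\sigma$ this condition reads $D^2\omega(v_*)\psi=o(\Vert\psi\Vert)$ with $\psi$ ranging over $(\nabla\omega(v)-\nabla\omega(v_*))^\perp$, so what you actually need is that $D^2\omega(v_*)$ does not annihilate that hyperplane. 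Hypothesis~\eqref{hyp:linear_indep_rep} only guarantees that $\nabla\omega$ is nonconstant; it does not prevent $D^2\omega$ from being singular at the particular $v_*$ you have fixed, in the particular direction the chart forces on you. You would therefore have to prove that for every $\bx$ one can \emph{choose} $\by$ (and then an annulus in $\sigma$) on which the submersion holds---a statement that is plausible but neither immediate from~\eqref{hyp:linear_indep_rep} nor supplied. Without it, the coarea step produces no kernel $K_2$ with regularity in $v$, and the bootstrap stalls. A secondary point: once you have a $\CC^1$ representative $\tilde g$, Theorem~\ref{th:22} as stated requires the \emph{global} relation~\eqref{cup}--\eqref{cus}, not only~\eqref{cond:22} in each chart; you would need to verify the global relation for $\tilde g$ or argue that the chart-wise version suffices (for instance by appealing to the proof rather than the statement of Theorem~\ref{th:22}). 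Your invocation of the $\partial A$-null hypothesis at this stage is not what accomplishes that passage.
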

\medskip

In the remark below, we present counterexamples which show the limitations 
of our Theorem.

\begin{remark}
\label{rem:th22}
We first note that in dimension $d=1$, forgetting about the assumption~\eqref{hyp:linear_indep_rep} which is meaningless in this case, the dispersion law $\omega(v)=\sqrt{1+v^2}$ provides a counterexample, since the function $\argsinh$ satisfies~\eqref{cond:22}~\cite{BalFer98}.
\par 
In dimension $d\geq 2$, assumption~\eqref{hyp:linear_indep_rep} is really crucial. Indeed, in dimension $d=2$, the smooth functions $\omega$ that do not satisfy~\eqref{hyp:linear_indep_rep} are, up to a rotational change of coordinates, of the form
\begin{equation}
\label{eq:cex22}
\omega(v_1,v_2) = \alpha h(v_1) + \beta v_2, 
\end{equation} 
where $\alpha,\beta\in\R$ and $h$ is any smooth function from $\R$ to $\R$. One can show that, if $\omega$ is of the form~\eqref{eq:cex22}, with a fuction $h$ that is strictly concave or convex (this condition is merely sufficient here, but not necessary), the set of $(v,v_*,v',v_*')$ satisfying (\ref{cus}) is exactly 
\begin{equation*}
\left\{(v,v_*,v,v_*) \ |\ (v,v_*)\in\left(\R^2\right)^2 \right\}\cup \left\{(v,v_*,v_*,v) \ |\ (v,v_*)\in\left(\R^2\right)^2 \right\}.
\end{equation*}
Therefore assumption~\eqref{cond:22} is trivially satisfied by \emph{any} function $g:\R^2\to \R$. 
\par 
In higher dimensions such counterexamples persist, and more could probably be found. Roughly speaking, assumption~\eqref{hyp:linear_indep_rep} ensures that the set of $(v,v_*,v',v_*')$ satisfying (\ref{cus})  is sufficiently \emph{non degenerate} and provides enough constraints so that $g$ is really constrained by $\omega$.
\end{remark}

\medskip

In the case of the Boltzmann equation (that is, $\omega(v) = |v|^2$), the proof of Thm. \ref{th:22} (that is, the case of equality of the  
the H-Theorem) is due to Boltzmann himself, provided that $g$ has some extra regularity (typically, two derivatives of $g$ are assumed to exist). 
It is based on the use of successive well-chosen differential operators, and uses the fact that the final form of $g$ being a polynomial of degree $2$, all its second order derivatives are constants.
Taking derivatives in the sense of distributions instead of taking them in the usual sense, it is not difficult to recover Thm. \ref{th:22version2}
in this case.
We refer for example to \cite{LD89} and \cite{wennberg} for a description of the proof in such a setting. 
\par 
For the relativistic Boltzmann equation, the situation is somewhat more intricate, but the approach based on the use of successive differential operators still works, and a mathematically rigorous proof (when $g$ is of class $\CC^2$) is available in \cite{cercignani_rel}. There is also suggested how to treat the case of non smooth $g$ by a suitable use of the theory of distributions.
\par 
In the general case that we now examine, we propose a proof which is based on significantly different principles. Indeed one is limited by the possible  singularity at point $0$ of $\omega$ when one wants to take derivatives and, more importantly, one needs to use somewhere in the proof the assumption (\ref{hyp:linear_indep_rep}), which, according to Remark \ref{rem:th22}, is close to a necessary condition (at least in dimension $2$) for obtaining the result. Such an assumption is reminiscent of the computations performed when estimating the Landau entropy dissipation in \cite{landau_jfa} and \cite{landau_braga}, cf. also \cite{strain} in the relativistic case. We shall therefore use a method of proof which makes use of the equivalent of the Landau equation when one deals with collision operators coming out of the theory of weak turbulence.
\par 
 More precisely, we shall first show that, in a weak sense, 
the functions $g$ satisfying (\ref{cup}), (\ref{cus}) are such that
\begin{equation}\label{lst}
(\nabla g(v) - \nabla g(v_*)) \times   (\nabla \omega(v) - \nabla \omega(v_*)) = 0. 
\end{equation}
This statement is easy to prove when both $\omega$ and $g$ are smooth. In the situation that we consider, the proof is unfortunately rather technical since all statements have to be understood in a weak sense. 
A second part of the proof consists in using suitable integral operators rather than differential operators. Note that this is the strategy adopted in \cite{landau_vill2}, \cite{landau_jfa} and \cite{landau_braga}. At this point, it 
significantly differs from the historical strategy consisting in differentiating formula (\ref{lst}) one or two times. Finally, the formulas obtained after applying the integral operators enable to write down a linear system which can be solved by Cramer's formula provided that 
some determinant is not $0$, and this condition in fact  corresponds  
to assumption (\ref{hyp:linear_indep_rep}).
\medskip

We finally provide some results on the so-called three-waves collision kernel appearing in weak turbulence theory~\cite{ZakLvoFal12}.  It writes
\begin{equation*}
\tilde Q_{\tilde W}(f) (v) = \int \left(R(v,v',v'')-R(v',v,v'')-R(v'',v,v')\right) dv'dv'',
\end{equation*}
where
\begin{equation*}
R(v,v',v'') = \tilde W(v,v',v'') \left(f(v')f(v'')-f(v)(f(v')+f(v''))\right) \delta_{\{v=v'+v''\}} \delta_{\{\omega(v)=\omega(v')+\omega(v'')\}},
\end{equation*}
and the nonnegative coefficient $\tilde W$ satisfies $\tilde W(v,v',v'')=\tilde W(v,v'',v')$. The associated entropy production is given by
\begin{align*}
&\int \tilde Q_{\tilde W}(f) (v) f^{-1}(v)dv = \\
&\qquad \int \tilde W (v,v',v'')\, f(v)\,f(v')\,f(v'')\,
\left(\frac{1}{f(v)}-\frac{1}{f(v')}-\frac{1}{f(v'')}\right)^2 \\
&\qquad\qquad\qquad\qquad \times \delta_{\{v=v'+v''\}} \delta_{\{\omega(v)=\omega(v')+\omega(v'')\}}dvdv'dv''.
\end{align*}
Considering strictly positive continuous functions $f$ and denoting $g=f^{-1}$, assuming moreover that $\tilde{W}>0$,
the case of equality in the H-theorem consists in looking for the functions $g: \R^d \to \R$ satisfying the following condition:
\begin{equation} \label{cond:21}
\omega(v')+\omega(v'')=\omega(v'+v'') \quad\Rightarrow\quad  g(v')+g(v'')=g(v'+v''),
\end{equation}
for all $(v',v'')\in \left(\R^d\right)^2$. It is obvious that for any 
$b\in\R^d$ and $c\in\R$, the function defined by
\begin{equation}
g(v): =b\cdot v + c\,\omega(v)
\end{equation}
satisfies condition (\ref{cond:21}). Our next result is devoted to establishing the 
reciprocal of this statement. Unfortunately, we shall need more stringent assumptions on $\omega$ and $g$ than in the four-waves case. More precisely, we can write the following theorem: 

\begin{theorem}
\label{th:21}
Let $d\geq 2$ and $\omega\in\CC^1(\R^d,\R)$ such that $\omega(0)=0$, $\nabla\omega(0)=0$, $\omega(v)>0$ for all $v\neq 0$ and $\nabla\omega(v)\neq 0$ for all $v\neq 0$. Assume also that, for all $a\in\R$, the set $\omega^{-1}\left(\{a\}\right)$ is connected. Let $g\in\CC^1(\R^d,\R)$ satisfying~\eqref{cond:21}. Then, there exist $b\in\R^d$ and $c\in\R$ such that, for all $v$ in $\R^d$,
\begin{equation}
g(v) =b\cdot v + c\,\omega(v) .
\end{equation}
\end{theorem}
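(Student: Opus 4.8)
The plan is to exploit the functional equation \eqref{cond:21} by differentiating along the resonance manifold $\{\omega(v')+\omega(v'')=\omega(v'+v'')\}$, and then to use the connectedness of the level sets of $\omega$ to integrate the resulting pointwise constraint into the claimed global form. Since $g$ is only $\CC^1$ here (unlike the distributional machinery needed for Theorem \ref{th:22version2}), I would work classically. First I would fix $v''$ and regard \eqref{cond:21} as an identity in $v'$ that holds on the hypersurface $S_{v''} := \{v' : \omega(v')+\omega(v'')=\omega(v'+v'')\}$. Since $\nabla\omega(v)\neq 0$ for $v\neq 0$, this hypersurface is smooth (away from degeneracies), and differentiating $g(v')+g(v'')=g(v'+v'')$ along tangent directions to $S_{v''}$ yields, for every tangent vector $\tau$ to $S_{v''}$ at $v'$,
\begin{equation*}
(\nabla g(v') - \nabla g(v'+v''))\cdot \tau = 0.
\end{equation*}
The defining constraint gives that the tangent space to $S_{v''}$ is the orthogonal complement of $\nabla\omega(v') - \nabla\omega(v'+v'')$, so the displayed relation forces $\nabla g(v')-\nabla g(v'+v'')$ to be parallel to $\nabla\omega(v')-\nabla\omega(v'+v'')$. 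This is the three-waves analogue of \eqref{lst}.

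Next I would convert this parallelism into a clean statement. Setting $v=v'+v''$ and letting $v''$ range so that $(v',v'')$ traces out the resonance set, one obtains that there is a scalar $\lambda$ (depending on the point) with $\nabla g(v') - \nabla g(v) = \lambda\,(\nabla\omega(v') - \nabla\omega(v))$ whenever $\omega(v')+\omega(v-v')=\omega(v)$. The goal is to show $\lambda$ is in fact a single global constant $c$; then $\nabla(g - c\,\omega)$ would take the same value at $v'$ and at $v$ along the whole resonance manifold, and I would promote this to $\nabla(g-c\,\omega)\equiv b$ constant. To pin down $c$, I would use the behaviour near the origin: since $\omega(0)=0$ and $\nabla\omega(0)=0$, sending $v''\to 0$ in the resonance relation (the resonance set accumulates at $v''=0$ with $v'\to v$) and using the $\CC^1$ regularity of both $g$ and $\omega$ lets me read off the value of the multiplier in a stable way, forcing it to be independent of the base point.

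The main obstacle I expect is precisely the globalization step: showing that the local multiplier $\lambda$ is constant across the whole domain and that the parallelism $\nabla g \parallel \nabla\omega$ assembled from many hypersurfaces yields a \emph{single} additive form. This is exactly where the hypotheses that each level set $\omega^{-1}(\{a\})$ is connected and that $\omega>0$, $\nabla\omega\neq0$ off the origin must enter: connectedness of the level sets prevents the multiplier from jumping between different connected components of $\{\omega=a\}$, and lets me propagate the relation $\nabla(g-c\,\omega)=b$ from one level set to the next by continuity as $a$ varies. Concretely, I would first establish along a fixed level set that $g - c\,\omega$ has constant gradient $b$ using connectedness, then vary $a$ and use the $\CC^1$ regularity to show $b$ and $c$ do not depend on $a$, and finally integrate $\nabla(g-c\,\omega)=b$ over the connected set $\R^d\setminus\{0\}$ (whose single puncture does not disconnect it for $d\geq 2$) to conclude $g(v)=b\cdot v + c\,\omega(v)$ up to a constant, which the condition $g(0)+g(v)=g(v)$ coming from $\omega(0)+\omega(v)=\omega(v)$ (valid since $\omega(0)=0$) forces to vanish.
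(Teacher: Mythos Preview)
Your overall direction—differentiate along the resonance manifold and use connectedness of level sets—matches the paper's, but there is a genuine gap at the step where you try to pin down the multiplier $\lambda$. From your two-point relation
\[
\nabla g(v')-\nabla g(v)=\lambda\bigl(\nabla\omega(v')-\nabla\omega(v)\bigr),\qquad v=v'+v'',
\]
you propose to send $v''\to 0$ to ``read off'' $\lambda$. But $v''\to 0$ forces $v'\to v$, and both sides of the identity tend to $0$; with only $\CC^1$ regularity you cannot extract a well-defined limit for $\lambda$ from this $0/0$ form. This is the place where the argument, as written, does not go through.

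The paper avoids this difficulty by evaluating at the \emph{other} trivial point of the resonance set. Concretely, it fixes $v$ and parametrizes $v''=\psi(v,\sigma)$ with $\psi(v,0)=0$; differentiating at $\sigma=0$ uses $\nabla\omega(0)=0$ to get $D\omega(v)D_\sigma\psi(v,0)=0$, and after first replacing $g$ by $\tilde g(v):=g(v)-\nabla g(0)\cdot v$ (so that $\nabla\tilde g(0)=0$) the same computation gives $D\tilde g(v)D_\sigma\psi(v,0)=0$. Since $D_\sigma\psi(v,0)$ has rank $d-1$, this yields directly the \emph{pointwise} collinearity $\nabla\tilde g(v)=\lambda_v\nabla\omega(v)$ for every $v\neq 0$, without any indeterminate limit. (Equivalently, in your parametrization you could simply evaluate your two-point relation at $v'=0$, which always lies on $S_{v''}$, to obtain $\nabla g(v'')-\nabla g(0)\parallel\nabla\omega(v'')$.) From there, connectedness of the level sets gives $\tilde g=\mu\circ\omega$ for a single scalar function $\mu$, and the paper finishes by deriving a local Cauchy equation $\mu(a)+\mu(h)=\mu(a+h)$ from~\eqref{cond:21}, which forces $\mu'$ to be constant and hence $\mu$ linear. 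Your proposed globalization—showing the two-point multiplier $\lambda$ is one global constant and then integrating $\nabla(g-c\omega)=b$—is replaced by this cleaner route; note in particular that the paper never shows $\lambda_v$ is constant in $v$, but rather bypasses that by passing to $\mu$.
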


Note that the disappearance of the constant term in $g$ (compared to Theorem~\ref{th:22} and Theorem~\ref{th:22version2}) comes from the fact that three-waves 
 interactions do not preserve the mass.

\begin{remark}
For some functions $\omega$, the set $\Gamma := \{(v',v''), \,\, \omega(v'+v'') = \omega(v') + \omega(v'') \}$ is reduced to
\begin{equation*}
\left\{(v,0,v) \ |\ v\in\R^d \right\}\cup \left\{(0,v,v) \ |\ v\in\R^d \right\}.
\end{equation*}
In such cases, assumption~\eqref{cond:21} is trivially satisfied by any function $g:\R^d \to \R$. In Theorem~\ref{th:21}, this degeneracy is mainly prevented by the smoothness assumption on $\omega$. Indeed, typical examples of dispersion laws leading to a degenerate set $\Gamma$ 
are of the form $\omega(v)= |v|^\beta$, $\beta<1$, which are excluded from Theorem~\ref{th:21} since they are not differentiable at point $0$.

For the limiting case $\omega(v)= |v|$ in dimension $2$, which is \emph{almost} $\CC^1$ (in the sense that it belongs to $W^{1,\infty}_{loc}$), it is easy to see that any function $g$ taking, in polar coordinates, the form $g(r,\theta)=rh(\theta)$, satisfies~\eqref{cond:21}. Therefore the $\CC^1$ assumption is somewhat close to optimal.

Another counterexample of particular interest is given (still in dimension $d=2$),  by
\begin{equation*}
\omega(v) = \frac{v_1}{1+v_1^2+v_2^2},
\end{equation*}
which is the dispersion law corresponding to Rossby waves. This dispersion law is known to be degenerate, meaning that it admits an extra invariant~\cite{Bal91}
\begin{equation*}
g(v)=\arctan\frac{v_1\sqrt 3+v_2}{v_1^2+v_2^2}-\arctan\frac{-v_1\sqrt 3+v_2}{v_1^2+v_2^2},
\end{equation*} 
satisfying~\eqref{cond:21}. Note that several assumptions of Theorem~\ref{th:21} are not satisfied by $g$ and $\omega$ in this counterexample. 
\end{remark}
\medskip

The proof of Theorem \ref{th:21} follows roughly the same lines as that of Theorem \ref{th:22version2}, but many details are different. A first significant intermediary result consists in establishing that $g$ is in fact of the form $g(v) = \nabla g(0)\cdot v + \mu(\omega(v))$, where $\mu$ is a function with a certain amount of smoothness. This is done by using a differentiation with respect to a variable which parametrizes locally the set of $v, z$ such that $\omega(v) + \omega(z) = \omega (v+z)$. It remains then to identify $\mu$: this is done by writing a functional equation for $\mu$ and by using again some differentiation.

\bigskip 

The rest of the paper is structured as follows: In section \ref{sec2}, we prove our main theorem (that is, Theorem \ref{th:22version2}). Then, section \ref{sec3} is devoted to the proof of Theorem \ref{th:21}.

\section{Proof of the main result} \label{sec2}

In this section we prove Theorem~\ref{th:22} and Theorem~\ref{th:22version2}.

\medskip

We start with the proof of Lemma~\ref{lem:para_var}, which enables to define properly the assumptions of Theorem~\ref{th:22version2}.
\medskip 

\noindent\textit{Proof of Lemma \ref{lem:para_var}.}
We consider the function \begin{equation*}
\Phi:\left\{
\begin{aligned}
&\R^d\times\R^d\times\R^d \to \R ,\\
&(v,v_*,z) \mapsto \omega(v)+\omega(v_*)-\omega(v-z)-\omega(v_*+z) .
\end{aligned}
\right.
\end{equation*}
Since $(\bx,\by)\in A$, we see that
\begin{equation*}
\nabla_z \Phi(\bx,\by,0)=\nabla\omega(\bx)-\nabla\omega(\by) \neq 0.
\end{equation*}
Thus, there exists $i\in\{1,\ldots,d\}$ such that $\pa_i\omega(\bx)-\pa_i\omega(\by)\neq 0$, and since $\Phi(\bar{v}, \bar{v_*},0) = 0$, thanks to the implicit function theorem, there exists a neighborhood $U\subset A$ of $(\bx,\by)$, a neighborhood $V\subset\R^{d-1}$ of $0$, and a function $h\in \CC^2(U\times V,\R)$ such that, for all $(v,v_*,z)$ in a neighborhood of $(\bx,\by,0)$,
\begin{equation*}
\Phi(v,v_*,z)=0 \quad \Leftrightarrow \quad  z_i=h(v,v_*,z_{i'}),
\end{equation*}
where $z_{i'}=(z_1,\ldots,z_{i-1},z_{i+1},\ldots,z_d)$. One can then consider the function $\psi:U\times V \to \R$ defined component-wise by
\begin{equation*}
\psi_j(v,v_*,\sigma) = \left\{
\begin{aligned}
&\sigma_j \quad &j<i ,\\
&h(v,v_*,\sigma) \quad &j=i ,\\
&\sigma_{j-1} \quad &j>i ,
\end{aligned}
\right.
\end{equation*}
and the lemma is proven. \hfill $\qed$
\medskip

We now turn to the proof of Theorem~\ref{th:22version2}. We start with the 

\begin{lemma}
\label{lem:diffeo}
Let $d\geq 2$ and $\omega\in\CC^2(\Rd,\R)$. Let $(\bx,\by)\in A$ and consider $U$, $V$ and $\psi$ as in Lemma~\ref{lem:para_var}. Let $\gamma_\sigma:\R^d\times\R^d \to \R^d\times\R^d$ defined as
\begin{equation*}
\gamma_{\sigma}(v,v_*)=\left(v-\psi(v,v_*,\sigma),v_*+\psi(v,v_*,\sigma)\right).
\end{equation*}
To shorten some expressions we also introduce
\begin{equation*}
X=(v,v_*),\quad \Psi(X,\sigma)=(-\psi(v,v_*,\sigma),\psi(v,v_*,\sigma)),
\end{equation*}
so that
\begin{equation*}
\gamma_\sigma(X)=X+\Psi(X,\sigma).
\end{equation*}
For all neighborhood $U_1\subset A$ of $(\bx,\by)$ such that $\bar U_1\subset U$, there exists $\sigma_0>0$ such that, for all $\Vert\sigma\Vert\leq \sigma_0$, $\gamma_\sigma$ is a $\CC^1$ diffeomorphism on $U_1$. Besides, $\gamma_0=Id$ and, for all $X\in U_1$, both $\sigma\mapsto \gamma_\sigma^{-1}(X)$ and $\sigma\mapsto D\gamma_\sigma^{-1}(X)$ are differentiable at $0$, and 
\begin{equation}
\label{eq:d_sigma}
\frac{\partial}{\partial\sigma}\Bigr|_{\sigma=0}\gamma_\sigma^{-1}(X) = -D_\sigma\Psi(X,0), \quad 
\frac{\partial}{\partial\sigma}\Bigr|_{\sigma=0}D\gamma_\sigma^{-1}(X) = -D^2_{X,\sigma}\Psi(X,0).
\end{equation}
\end{lemma}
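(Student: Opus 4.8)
The plan is to treat $\gamma_\sigma$ as a one-parameter perturbation of the identity and to exploit the single structural fact that drives the whole lemma: since $\psi(\cdot,\cdot,0)\equiv 0$ on $U$, we have $\Psi(\cdot,0)\equiv 0$ there, so that not only $\Psi(X,0)=0$ but also $D_X\Psi(X,0)=0$ \emph{and} $D^2_{XX}\Psi(X,0)=0$ for every $X\in U$ (an identically vanishing function of $X$ has vanishing $X$-derivatives of all orders). The identity $\gamma_0=Id$ is then immediate. For the nonsingularity of the differential I would write $D_X\gamma_\sigma(X)=I+D_X\Psi(X,\sigma)$; since $\psi\in\CC^2$ the map $(X,\sigma)\mapsto D_X\Psi(X,\sigma)$ is continuous and vanishes on the compact set $\bar U_1\times\{0\}$ (note $\bar U_1\subset U$ with $U$ bounded, so $\bar U_1$ is compact), whence by uniform continuity there is $\sigma_0>0$ with $\Vert D_X\Psi(X,\sigma)\Vert\le 1/2$ for all $X\in\bar U_1$ and $\Vert\sigma\Vert\le\sigma_0$; a Neumann-series argument then makes $D_X\gamma_\sigma(X)$ invertible throughout $U_1$.

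Injectivity on the possibly non-convex $U_1$ I would obtain by a compactness/contradiction argument rather than a contraction estimate. If injectivity failed along a sequence $\sigma_n\to 0$, there would be distinct $X_n,Y_n\in\bar U_1$ with $\gamma_{\sigma_n}(X_n)=\gamma_{\sigma_n}(Y_n)$; after extraction, both $X_n$ and $Y_n$ converge to a common limit $X\in\bar U_1\subset U$ (using that $\gamma_{\sigma_n}\to\gamma_0=Id$ uniformly on $\bar U_1$). Writing $0=(I+A_n)(X_n-Y_n)$ with $A_n:=\int_0^1 D_X\Psi\bigl(Y_n+t(X_n-Y_n),\sigma_n\bigr)\,dt$ — which is legitimate because the global $\CC^2$ extension of $\psi$ makes $\Psi$ defined along the whole segment — forces $I+A_n$ to be singular; but $\Vert A_n\Vert\to 0$ since the integrand tends to $D_X\Psi(X,0)=0$, so $I+A_n$ is invertible for large $n$, a contradiction. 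Injectivity together with the everywhere-invertible differential gives, via the inverse function theorem, that $\gamma_\sigma$ is a $\CC^1$ diffeomorphism from $U_1$ onto the open set $\gamma_\sigma(U_1)$.

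For the differentiability and the two formulas in \eqref{eq:d_sigma}, I would apply the implicit function theorem to $\Theta(Y,\sigma):=\gamma_\sigma(Y)-X=Y+\Psi(Y,\sigma)-X$ at $(X,0)$, where $\Theta(X,0)=0$ and $D_Y\Theta(X,0)=I+D_X\Psi(X,0)=I$ is invertible: this yields a $\CC^2$ branch $\sigma\mapsto G(\sigma)$ with $\Theta(G(\sigma),\sigma)=0$ and $G(0)=X$, which by uniqueness of the inverse equals $\gamma_\sigma^{-1}(X)$ for small $\sigma$; hence $\sigma\mapsto\gamma_\sigma^{-1}(X)$ is differentiable at $0$. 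Differentiating $G(\sigma)+\Psi(G(\sigma),\sigma)=X$ and using $D_X\Psi(X,0)=0$ gives $G'(0)=-D_\sigma\Psi(X,0)$, the first formula. For the second, I would use the inverse-function identity $D\gamma_\sigma^{-1}(X)=\bigl[D_X\gamma_\sigma(G(\sigma))\bigr]^{-1}=M(\sigma)^{-1}$ with $M(\sigma):=I+D_X\Psi(G(\sigma),\sigma)$; since $M(0)=I$ and matrix inversion is smooth near the identity, $\sigma\mapsto M(\sigma)^{-1}$ is differentiable at $0$ with derivative $-M'(0)$. Computing $M'(0)$ by the chain rule produces the two terms $D^2_{XX}\Psi(X,0)\,G'(0)$ and $D^2_{X,\sigma}\Psi(X,0)$; the first vanishes thanks to $D^2_{XX}\Psi(X,0)=0$, leaving $M'(0)=D^2_{X,\sigma}\Psi(X,0)$ and therefore $\partial_\sigma|_{\sigma=0}D\gamma_\sigma^{-1}(X)=-D^2_{X,\sigma}\Psi(X,0)$.

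I expect the main obstacle to be the global injectivity on an arbitrary, non-convex $U_1$: the naive Lipschitz-contraction argument only controls differences along segments remaining inside $\bar U_1$, so the clean route is the compactness argument above, whose only subtle point is verifying that the colliding pairs collapse to a single interior point of $U$ where $D_X\Psi(\cdot,0)=0$ may be invoked. The remaining delicate bookkeeping is the chain-rule evaluation of $M'(0)$, and the whole of \eqref{eq:d_sigma} ultimately rests on the elementary observation that $\Psi(\cdot,0)\equiv 0$ forces both $D_X\Psi(\cdot,0)$ and $D^2_{XX}\Psi(\cdot,0)$ to vanish on $U$, which is precisely what removes the unwanted term.
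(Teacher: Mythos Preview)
Your argument is correct and follows the same overall skeleton as the paper: both exploit $\gamma_\sigma=Id+\Psi(\cdot,\sigma)$ with $\Psi(\cdot,0)\equiv 0$, use uniform continuity on $\bar U_1$ to get $\Vert D_X\Psi\Vert\le 1/2$, and invert $D_X\gamma_\sigma$ via a Neumann series. The differences are in the technical execution of two steps. For injectivity, the paper simply asserts that it follows from the bound $\Vert D_X\Psi\Vert\le 1/2$ on $U_1$, which tacitly assumes the segment between any two colliding points stays where the bound holds; your compactness/contradiction argument, using that $\psi$ is globally $\CC^2$ so the integral along the segment makes sense even outside $U_1$, is more careful and genuinely handles non-convex $U_1$. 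For the differentiability of $\sigma\mapsto\gamma_\sigma^{-1}(X)$, the paper proceeds by a bare-hands Taylor expansion (introducing the remainder $g(\sigma)=\gamma_\sigma^{-1}(X)-X+D_\sigma\Psi(X,0)\sigma$ and showing $g(\sigma)=o(\Vert\sigma\Vert)$), whereas you invoke the implicit function theorem on $\Theta(Y,\sigma)=Y+\Psi(Y,\sigma)-X$, which is cleaner and immediately gives smoothness. For the second formula in~\eqref{eq:d_sigma}, the paper differentiates the Neumann series directly, while you use the chain rule on $M(\sigma)^{-1}$ together with the observation $D^2_{XX}\Psi(X,0)=0$; both are equivalent, and your explicit identification of $D^2_{XX}\Psi(X,0)=0$ as the mechanism that kills the unwanted term is a nice clarification.
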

\begin{proof}
Since $\Psi(X,0)=0$ for all $X$ in $U$, we immediately get $\gamma_0=Id$. Therefore, using the uniform continuity, there exists $\sigma_0>0$ such that, for all $\Vert\sigma\Vert\leq \sigma_0$ and all $X\in U_1$,
\begin{equation}
\label{eq:DXPsi}
\left\Vert D_X\Psi(X,\sigma) \right\Vert \leq \frac{1}{2}.
\end{equation}
Recalling that $\gamma_\sigma=Id+\Psi(\cdot,\sigma)$, we infer that $D\gamma_\sigma(X)$ is invertible for all $\Vert\sigma\Vert\leq \sigma_0$ and all $X\in U_1$, with
\begin{equation}
\label{eq:Dgamma_inv}
D\gamma_\sigma(X)^{-1} = \sum_{k=0}^\infty (-1)^k \left(D_X\Psi(X,\sigma)\right)^k.
\end{equation}
Besides, $\gamma_\sigma$ is injective on $U_1$ by~\eqref{eq:DXPsi}, therefore $\gamma_\sigma$ is indeed a $\CC^1$ diffeomorphism on $U_1$ for all $\Vert\sigma\Vert\leq \sigma_0$.

Next, we fix $X\in U_1$ and show that $\sigma\mapsto \gamma_\sigma^{-1}(X)$ is continuous at point $0$. Since $\gamma_0=Id$, for $\sigma$ small enough, we see that $X\in\gamma_\sigma (U_1)$, and we can estimate
\begin{align*}
\left\Vert \gamma_\sigma^{-1}(X) - \gamma_0^{-1}(X) \right\Vert &= \left\Vert \gamma_\sigma^{-1}(X) - X \right\Vert \\
&= \left\Vert \gamma_\sigma^{-1}(\gamma_\sigma(X) +(\gamma_0(X)-\gamma_\sigma(X))) - X \right\Vert \\
&= \left\Vert \gamma_\sigma^{-1}(\gamma_\sigma(X)) + D\gamma_\sigma^{-1}(\gamma_\sigma(X))(\gamma_0(X)-\gamma_\sigma(X))+ o(\gamma_0(X)-\gamma_\sigma(X)) - X \right\Vert \\
&\leq \left\Vert D\gamma_\sigma^{-1}(\gamma_\sigma(X))\right\Vert \left\Vert \gamma_0(X)-\gamma_\sigma(X)\right\Vert + o(\left\Vert\gamma_0(X)-\gamma_\sigma(X)\right\Vert).
\end{align*}
Since $\left\Vert D\gamma_\sigma^{-1}(\gamma_\sigma(X))\right\Vert$ is bounded uniformly in $\sigma$ for $\sigma$ small enough (for instance by $2$ for $\Vert\sigma\Vert \leq \sigma_0$), we get that $\sigma\mapsto \gamma_\sigma^{-1}(X)$ is continuous at point $0$. To then show that it is differentiable (with the announced derivative), we introduce
\begin{equation*}
g(\sigma)=\gamma_\sigma^{-1}(X)-\gamma_0^{-1}(X) + D_\sigma\Psi(X,0)\sigma.
\end{equation*}
We have just shown that $g(\sigma)$ is going to $0$ when $\sigma$ goes to $0$, and we now have to prove that $g(\sigma)=o(\Vert\sigma\Vert)$. We start from
\begin{align*}
\gamma_\sigma^{-1}(X) = X - D_\sigma\Psi(X,0)\sigma + g(\sigma),
\end{align*}
compose with $\gamma_\sigma$, and use a first order Taylor expansion around $\gamma_0(X)$, and obtain
\begin{align*}
X &= \gamma_\sigma\left(X - D_\sigma\Psi(X,0)\sigma + g(\sigma)\right) \\
&= \gamma_0(X) +D\gamma_0 (X) \left(- D_\sigma\Psi(X,0)\sigma + g(\sigma)\right) + \frac{\partial}{\partial\sigma}\Bigr|_{\sigma=0}\gamma_0(X)\sigma +o\left(\Vert\sigma\Vert + \left\Vert - D_\sigma\Psi(X,0)\sigma + g(\sigma) \right\Vert\right) \\
&= X - D_\sigma\Psi(X,0)\sigma + g(\sigma) + D_\sigma\Psi(X,0)\sigma +o\left(\Vert\sigma\Vert + \left\Vert - D_\sigma\Psi(X,0)\sigma + g(\sigma) \right\Vert\right).
\end{align*}
Therefore, we end up with
\begin{align*}
g(\sigma) &= o\left(\Vert\sigma\Vert + \left\Vert - D_\sigma\Psi(X,0)\sigma + g(\sigma) \right\Vert\right)\\
&=o(\Vert\sigma\Vert),
\end{align*}
which proves that $\frac{\partial}{\partial\sigma}\Bigr|_{\sigma=0}\gamma_\sigma^{-1}(X)$ exists and is equal to $-D_\sigma\Psi(X,0)$. The existence of $\frac{\partial}{\partial\sigma}\Bigr|_{\sigma=0}D\gamma_\sigma^{-1}(X)$ and the fact that it is equal to $-D^2_{X,\sigma}\Psi(X,0)$ is more straightforward and can be obtained directly from~\eqref{eq:Dgamma_inv}, using that $\Psi$ is twice differentiable and that $D_X\Psi(X,0)=0$.
\end{proof}

We now prove an intermediary result, which is a key ingredient in our proof of Theorem~\ref{th:22version2}. 

\begin{proposition}
\label{prop:weak_form}
Let $d\in\{2,3\}$ and $\omega\in\CC^2(\Rd,\R)$. Let $g\in L^1_{loc}(\R^d,\R)$ satisfying~\eqref{cond:22}, with the notations of Lemma \ref{lem:para_var}. Assume (still using those notations) that the boundary of $A$ is of Lebesgue measure zero. Then, for all $\varphi\in\CC^1_c(\left(\Rd\right)^2,\R)$,
\begin{align*}
\int_{\left(\Rd\right)^2}  \left(g(v)\nabla_v\varphi(v,v_*)-g(v_*)\nabla_{v_*}\varphi(v,v_*)\right)\times\left(\nabla\omega(v)-\nabla\omega(v_*)\right)dv dv_*=0.
\end{align*}
\end{proposition}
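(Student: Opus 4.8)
The plan is to establish the weak (distributional) version of the pointwise identity $(\nabla g(v)-\nabla g(v_*))\times(\nabla\omega(v)-\nabla\omega(v_*))=0$ by differentiating the invariance relation~\eqref{cond:22} with respect to the collision parameter $\sigma$ at $\sigma=0$. Throughout I abbreviate $W:=\nabla\omega(v)-\nabla\omega(v_*)$ and $a_k:=\partial_{\sigma_k}\psi(v,v_*,0)$ for $k=1,\dots,d-1$. Differentiating the defining relation~\eqref{eq:def_psi} at $\sigma=0$ yields $W\cdot a_k=0$, so that by the rank condition~\eqref{eq:psi_subm} the $\CC^1$ vector fields $a_1,\dots,a_{d-1}$ form a basis of $W^\perp$ at every point of $A$. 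I would first reduce to a local statement: the integrand vanishes on $A^c=\{W=0\}$, so only the part over the open set $A$ contributes, and using that $\partial A$ is Lebesgue-null together with a $\CC^1$ partition of unity $\{\rho_i\}$ subordinate to a covering of $A$ by neighbourhoods $U_1$ as in Lemma~\ref{lem:diffeo}, the correction terms generated by the partition of unity are (up to the null set $\partial A$) supported where $\sum_i\nabla\rho_i=0$ or where $W=0$, hence drop out. It therefore suffices to prove the claimed identity for every $\varphi\in\CC^1_c(U_1)$, with a single such neighbourhood.

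Fix such a $\varphi$, write $G(v,v_*):=g(v)+g(v_*)\in L^1_{loc}$, and for $\Vert\sigma\Vert\le\sigma_0$ set $K(\sigma):=\int_{U_1}\big(G\circ\gamma_\sigma-G\big)\,\varphi\,dX$. By~\eqref{cond:22} and Fubini in $(\sigma,X)$, we have $G\circ\gamma_\sigma=G$ a.e. for a.e. $\sigma$, hence $K(\sigma)=0$ for almost every $\sigma$. On the other hand, the change of variables $Y=\gamma_\sigma(X)$ rewrites the first term as $\int G\,(\varphi\circ\gamma_\sigma^{-1})\,\lvert\det D\gamma_\sigma^{-1}\rvert\,dY$; since $G\in L^1$ on the fixed compact subset of $U$ containing $\gamma_\sigma(\mathrm{supp}\,\varphi)$ for all $\Vert\sigma\Vert\le\sigma_0$, and since $\sigma\mapsto\gamma_\sigma^{-1}(Y)$ and $\sigma\mapsto D\gamma_\sigma^{-1}(Y)$ are differentiable at $0$ by Lemma~\ref{lem:diffeo}, dominated convergence lets me differentiate $K$ at $\sigma=0$. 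Using $\gamma_0=\mathrm{Id}$, $\det D\gamma_0^{-1}=1$, and that~\eqref{eq:d_sigma} gives $\partial_{\sigma_k}\gamma_\sigma^{-1}\big|_0=(a_k,-a_k)$ and $\partial_{\sigma_k}\det D\gamma_\sigma^{-1}\big|_0=\diverg_v a_k-\diverg_{v_*}a_k$, I obtain
\begin{equation*}
\partial_{\sigma_k}K(0)=\int_{U_1} G\,\big[\diverg_v(\varphi a_k)-\diverg_{v_*}(\varphi a_k)\big]\,dX .
\end{equation*}
Finally, since $K$ vanishes a.e. and is differentiable at $0$, its gradient there must vanish: if $\nabla K(0)=c\neq0$ then $K(\sigma)=c\cdot\sigma+o(\Vert\sigma\Vert)$ would be bounded below by $\tfrac12\lvert c\rvert\,\Vert\sigma\Vert>0$ on a positive-measure cone accumulating at $0$, contradicting $K=0$ a.e. Hence $\partial_{\sigma_k}K(0)=0$ for every $k$.

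Expanding $G=g(v)+g(v_*)$ in the relation $\partial_{\sigma_k}K(0)=0$, the two cross terms vanish because $\int_v\diverg_v(\varphi a_k)\,dv=0$ and $\int_{v_*}\diverg_{v_*}(\varphi a_k)\,dv_*=0$ by compact support, leaving, for all $\varphi\in\CC^1_c(U_1)$ and all $k$, the identity $(\star)$: $\int_{U_1} g(v)\,\diverg_v(\varphi a_k)\,dX=\int_{U_1} g(v_*)\,\diverg_{v_*}(\varphi a_k)\,dX$, which is precisely the weak form of $(\nabla g(v)-\nabla g(v_*))\cdot a_k=0$. To recover the cross product, I write the target integrand in components with the Levi-Civita symbol $\varepsilon$: because the Hessian of $\omega$ is symmetric, the curl-of-gradient term drops and $\diverg_v\big(\varphi\,(W\times e_m)\big)=\sum_{k,l}\varepsilon_{mkl}\,\partial_{v_k}\varphi\,W_l$ (and similarly in $v_*$), so the $m$-th component of the claimed integral equals $\int g(v)\diverg_v F^{(m)}-\int g(v_*)\diverg_{v_*}F^{(m)}$ with $F^{(m)}:=\varphi\,(W\times e_m)$ in $d=3$ (and $F:=\varphi\,(W_2,-W_1)$ in $d=2$). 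Since $W\times e_m\in W^\perp=\mathrm{span}(a_1,\dots,a_{d-1})$ and the $a_k$ are $\CC^1$ and linearly independent, Cramer's rule (with the invertible Gram matrix $(a_k\cdot a_l)$) gives $W\times e_m=\sum_k c_k^{(m)}a_k$ with $c_k^{(m)}\in\CC^1(U_1)$; thus $F^{(m)}=\sum_k (c_k^{(m)}\varphi)\,a_k$, and each resulting term is an instance of $(\star)$ with the admissible test function $c_k^{(m)}\varphi\in\CC^1_c(U_1)$, hence zero. This yields the proposition.

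The step I expect to be the main obstacle is the rigorous differentiation of $K$ under the integral sign: with $g$ only in $L^1_{loc}$ and with Lemma~\ref{lem:diffeo} providing differentiability of the flow in $\sigma$ only at the single point $\sigma=0$, one must produce an $L^1$-domination of the difference quotients of $(\varphi\circ\gamma_\sigma^{-1})\lvert\det D\gamma_\sigma^{-1}\rvert$ that is uniform in $Y$ for $\Vert\sigma\Vert\le\sigma_0$; this requires exploiting the $\CC^2$-regularity of $\psi$ (through the explicit Neumann series~\eqref{eq:Dgamma_inv}) to obtain a Lipschitz-in-$\sigma$ bound beyond the pointwise statement of the lemma. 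The companion subtlety is the measure-theoretic passage from ``$K=0$ a.e.'' to ``$\nabla K(0)=0$'', handled by the cone argument above.
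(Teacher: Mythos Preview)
Your local argument on a single chart $U_1$ coincides with the paper's: change variables by $\gamma_\sigma$, note the resulting expression is $\CC^1$ in $\sigma$ (this follows from $\psi\in\CC^2$ and the implicit function theorem applied to $(X,\sigma)\mapsto\gamma_\sigma(X)-Y$, so your worry about dominated convergence is not the real obstacle, and the cone argument is superfluous once one has continuity in $\sigma$), differentiate at $\sigma=0$, and obtain the weak relation $(\star)$. Your extraction of the cross product from $(\star)$, by expanding $W\times e_m$ in the $\CC^1$ basis $(a_k)$ of $W^\perp$ and absorbing the Cramer coefficients into the test function, is a clean variant of the paper's route, which instead writes $a_1\times a_2=\lambda W$ with a nonvanishing $\CC^1$ scalar $\lambda$ and invokes $\diverg(b\otimes a-a\otimes b)=\rot(a\times b)$.

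The genuine gap is the reduction from $\CC^1_c(U_1)$ to $\CC^1_c\bigl((\Rd)^2\bigr)$. A partition of unity on $A$ gives the result for $\varphi\in\CC^1_c(A)$ (finitely many $\rho_i$ meet a compact of $A$; this is the paper's Lemma~\ref{lem:partition_unite}), but when $\supp\varphi$ meets $\partial A$ infinitely many $\rho_i$ are involved and your claim that the correction terms ``drop out'' requires interchanging $\sum_i$ and $\int$, which is not justified since $\sum_i|\nabla\rho_i|$ is in general not integrable up to $\partial A$. The paper supplies the missing quantitative input: it builds a cutoff $\varphi_n$ equal to $\varphi$ on a $\tfrac1n$-neighbourhood of $A^c$, supported in a $\tfrac2n$-neighbourhood, with $\Vert\nabla\varphi_n\Vert\le Cn$; then $\varphi-\varphi_n\in\CC^1_c(A)$ handles one piece, while on $\supp\varphi_n$ one has $\Vert W\Vert\le C/n$ (since $W$ is Lipschitz and vanishes on $A^c$), so the remaining piece is bounded by $C\int_{\{0<\dist(\cdot,A^c)\le 2/n\}\cap\supp\varphi}(|g(v)|+|g(v_*)|)\to 0$ by dominated convergence and $|\partial A|=0$. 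This balance $Cn\cdot C/n=O(1)$ is exactly what your sketch lacks; an equivalent repair of your argument is to take a Whitney-type partition with $\sum_i|\nabla\rho_i|\le C/\dist(\cdot,A^c)$ and again exploit $|W|\le C\,\dist(\cdot,A^c)$.
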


\begin{remark}
In the above statement, if $a,b\in\R^d$ with $d=2$, the cross product $a\times b$ must be understood as $a_1b_2-a_2b_1$.
\end{remark}

We start the proof of this proposition with two lemmas. 

\begin{lemma}
\label{lem:weak_form}
Let $d\in\{2,3\}$ and $\omega\in\CC^2(\Rd,\R)$. Let $(\bx,\by)\in A$ and consider $U$, $V$ and $\psi$ as in Lemma~\ref{lem:para_var}. Let $g\in L^1_{loc}(\R^d,\R)$ such that, for almost every $(v,v_*)$ in $U$ and almost every $\sigma\in V$,
\begin{equation*}
g(v)+g(v_*)=g(v-\psi(v,v_*,\sigma))+g(v_*+\psi(v,v_*,\sigma)).
\end{equation*}
Then, for all $\varphi\in\CC^1_c(U,\R)$,
\begin{align*}
\int_{U}  \left(g(v)\nabla_v\varphi(v,v_*)-g(v_*)\nabla_{v_*}\varphi(v,v_*)\right)\times\left(\nabla\omega(v)-\nabla\omega(v_*)\right)dv dv_*=0.
\end{align*}
\end{lemma}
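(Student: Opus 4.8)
The plan is to carry out, in a purely weak sense, the classical pointwise computation. Writing $G(v,v_*):=g(v)+g(v_*)$, assumption \eqref{cond:22} says exactly that $G$ is invariant under the maps $\gamma_\sigma$ of Lemma~\ref{lem:diffeo}, i.e. $G=G\circ\gamma_\sigma$ almost everywhere. If $g$ were smooth, differentiating this at $\sigma=0$ would give $(\nabla g(v)-\nabla g(v_*))\cdot\zeta_k=0$ for $k=1,\dots,d-1$, where $\zeta_k:=\pa_{\sigma_k}\psi(v,v_*,0)$; differentiating \eqref{eq:def_psi} at $\sigma=0$ gives likewise $(\nabla\omega(v)-\nabla\omega(v_*))\cdot\zeta_k=0$; and since by \eqref{eq:psi_subm} the $\zeta_k$ are $d-1$ independent vectors while $W:=\nabla\omega(v)-\nabla\omega(v_*)\neq0$ on $A$, both $\nabla g(v)-\nabla g(v_*)$ and $W$ must lie in the line orthogonal to $\mathrm{span}\,\{\zeta_k\}$, whence \eqref{lst}; one integration by parts then produces the stated identity. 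Since here $g\in L^1_{loc}$, I cannot differentiate $g$, so the whole $\sigma$-dependence must instead be pushed onto smooth objects through the change of variables $\gamma_\sigma$.

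Concretely, I fix $\phi\in\CC^1_c(U_1)$ with $\overline{U_1}\subset U$, multiply $G=G\circ\gamma_\sigma$ by $\phi$, integrate over $U_1$, and change variables $Y=\gamma_\sigma(X)$ to transfer the deformation onto the test function and the Jacobian. Setting $\Theta_\sigma(Y):=\phi(\gamma_\sigma^{-1}(Y))\,\lvert\det D\gamma_\sigma^{-1}(Y)\rvert$ (a $\CC^1_c$ function supported, for $\sigma$ small, in a fixed compact subset of $U$), this gives $\Xi(\sigma):=\int_U G\,(\Theta_\sigma-\phi)\,dY=0$ for a.e. small $\sigma$. The point is now that $\Theta_\sigma$ depends on $\sigma$ in a $\CC^1$ way \emph{uniformly in $Y$}, by the differentiability statements \eqref{eq:d_sigma} of Lemma~\ref{lem:diffeo} (which follow from the explicit Neumann series and the $\CC^2$ regularity of $\psi$); hence $\Xi$ is continuous. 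Being zero almost everywhere and continuous, $\Xi$ vanishes identically, so $\pa_{\sigma_k}\Xi(0)=0$. Differentiating under the integral sign (legitimate since $G\in L^1_{loc}$ and $\pa_{\sigma_k}\Theta_\sigma$ is bounded with fixed compact support) and using $\pa_{\sigma_k}\Theta_0=\diverg_Y(\phi\,\xi_k)$ with $\xi_k:=-\pa_{\sigma_k}\Psi(\cdot,0)=(\zeta_k,-\zeta_k)$, I obtain $\int_U G\,\diverg_Y(\phi\,\xi_k)\,dvdv_*=0$ for every $\phi$ and every $k$.

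Expanding $G=g(v)+g(v_*)$ and recalling $\xi_k=(\zeta_k,-\zeta_k)$, this reads $\int_U (g(v)+g(v_*))\,[\diverg_v(\phi\zeta_k)-\diverg_{v_*}(\phi\zeta_k)]=0$. The two mixed terms $\int_U g(v)\,\diverg_{v_*}(\phi\zeta_k)$ and $\int_U g(v_*)\,\diverg_v(\phi\zeta_k)$ vanish by Fubini together with the divergence theorem, since for fixed $v$ (resp. $v_*$) the field $\phi\zeta_k$ has compact support in the complementary slice. There remains $\int_U g(v)\,\diverg_v(\phi\zeta_k)-\int_U g(v_*)\,\diverg_{v_*}(\phi\zeta_k)=0$ for all $\phi\in\CC^1_c(U)$ and all $k$. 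Because the $\zeta_k$ are $\CC^1$, pointwise independent, and orthogonal to the nonvanishing field $W$, they form a $\CC^1$ frame of the hyperplane $W^{\perp}$; inverting their (invertible, $\CC^1$) Gram matrix, every $Z\in\CC^1_c(U,\R^d)$ with $Z\perp W$ can be written $Z=\sum_k c_k\zeta_k$ with $c_k\in\CC^1_c(U)$. Summing the previous identities yields $\int_U g(v)\,\diverg_v Z-\int_U g(v_*)\,\diverg_{v_*}Z=0$ for every such $Z$.

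It then remains to choose $Z$ so as to recover $W$. For $d=2$ I take $Z=\varphi\,W^{\perp}$ with $W^{\perp}=(-W_2,W_1)$; for $d=3$ I take $Z=\varphi\,(W\times e)$ with $e\in\R^3$ arbitrary. In each case $Z\perp W$, and — this is exactly where $\omega\in\CC^2$ enters — the zeroth-order-in-$\varphi$ contributions $\diverg_v W^{\perp}$ and $\diverg_v(W\times e)$ (and their $v_*$ analogues) vanish, since in the $v$-variable $W$ reduces to the gradient $\nabla\omega(v)$ and equality of mixed partials kills the relevant combination. Hence $\diverg_v Z=-\nabla_v\varphi\times W$ when $d=2$, and $\diverg_v Z=e\cdot(\nabla_v\varphi\times W)$ when $d=3$, with the same relations in $v_*$; substituting gives, for $d=3$, $e\cdot\int_U(g(v)\nabla_v\varphi-g(v_*)\nabla_{v_*}\varphi)\times W\,dvdv_*=0$ for all $e$, hence the claim, the case $d=2$ being immediate. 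I expect the main obstacle to be the rigorous $\sigma$-differentiation of the second step: as \eqref{cond:22} holds only for a.e. $\sigma$, no difference quotient can be differentiated directly, and the clean resolution is to note that $\Xi$ is continuous and vanishes a.e., hence everywhere — which itself rests on the uniform-in-$Y$ differentiability of $\gamma_\sigma^{-1}$ provided by Lemma~\ref{lem:diffeo}; a secondary delicate point is the frame argument promoting the $\zeta_k$-identities to an arbitrary $W^{\perp}$-valued test field, completed by the $\CC^2$-cancellation that finally brings in $W$.
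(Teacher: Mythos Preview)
Your argument is correct and shares its first half with the paper: you introduce $G$, push the $\sigma$-dependence onto the test function via the change of variables $\gamma_\sigma$, use continuity of $\Xi$ to upgrade the a.e.\ identity to an everywhere one, and differentiate at $\sigma=0$ to reach the weak form of $(\nabla g(v)-\nabla g(v_*))\cdot\zeta_k=0$. Your simplification of the $g(v)+g(v_*)$ identity by killing the mixed terms via Fubini and the divergence theorem is exactly what the paper does (though the paper does not spell it out).

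The second half, however, genuinely differs. The paper stays concrete: denoting by $e_1,e_2$ the columns of $D_\sigma\psi(\cdot,0)$ (in $d=3$), it substitutes test functions of the form $\varphi e_j$ to build the antisymmetric tensor $e_2\otimes e_1-e_1\otimes e_2$, applies the identity $\diverg(b\otimes a-a\otimes b)=\rot(a\times b)$, uses $e_1\times e_2=\lambda\,(\nabla\omega(v)-\nabla\omega(v_*))$ with a nonvanishing $\CC^1$ scalar $\lambda$, and then removes $\lambda$ by replacing $\varphi$ with $\varphi/\lambda$. You instead observe that the $\zeta_k$ form a $\CC^1$ frame of $W^\perp$, invert their Gram matrix to extend the identity to any $W^\perp$-valued $\CC^1_c$ field $Z$, and then take $Z=\varphi\,W^\perp$ (for $d=2$) or $Z=\varphi\,(W\times e)$ (for $d=3$), exploiting $\rot\nabla\omega=0$ to kill the zeroth-order terms. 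Your route is a bit cleaner in that it never introduces or removes the auxiliary scalar $\lambda$, and it makes the geometric content (any test field tangent to the resonance manifold works) more visible; the paper's route has the merit of packaging the whole passage into a single vector-calculus identity. Both rely at the same spot on $\omega\in\CC^2$ (equality of mixed partials of $\omega$) to make the final $\varphi$-independent terms vanish.
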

\begin{proof}
We start by introducing $\gamma_\sigma$ as defined in Lemma~\ref{lem:diffeo}, together with $G(X)=g(v)+g(v_*)$ (still using the notations of Lemma~\ref{lem:diffeo}). We have, for almost every $X$ in $U$ and almost every $\sigma$ in V,
\begin{equation*}
G(X)=G(\gamma_\sigma(X)).
\end{equation*}
Integrating against $\varphi\in\CC^1_c(U,\R)$, we get
\begin{equation*}
\int_U G(X)\varphi(X) dX = \int_U G(\gamma_\sigma(X)) \varphi(X) dX.
\end{equation*}
Next, we use the fact that, for $\sigma$ small enough, $\gamma_\sigma$ is a $\CC^1$ diffeomorphism. More precisely, thanks to Lemma~\ref{lem:diffeo}, we can consider $\sigma_0>0$ and an open subset $U_1$ of $A$ satisfying
\begin{equation*}
\supp(\varphi) \subset U_1 \subset \bar U_1\subset U,
\end{equation*}
and such that $\gamma_\sigma$ is a $\CC^1$ diffeomorphism on $U_1$, for all $\Vert\sigma\Vert\leq \sigma_0$. Using further that $\gamma_0=Id$, up to taking $\sigma_0$ small enough, we can assume that for all $\Vert\sigma\Vert\leq \sigma_0$,
\begin{equation*}
\gamma_\sigma\left(\supp(\varphi)\right) \subset U_1.
\end{equation*} 
Therefore, we obtain for almost every $\Vert\sigma\Vert\leq \sigma_0$,
\begin{align}
\label{eq:chngt_var}
\int_{U_1} G(X)\varphi(X) dX &= \int_{U_1} G(\gamma_\sigma(X)) \varphi(X) dX \nonumber\\
&= \int_{\gamma_\sigma(U_1)} G(X) \varphi(\gamma_\sigma^{-1}(X)) \left\vert \det(D\gamma_\sigma^{-1}(X)) \right\vert dX \nonumber\\
&= \int_{U_1} G(X) \varphi(\gamma_\sigma^{-1}(X)) \left\vert \det(D\gamma_\sigma^{-1}(X)) \right\vert dX.
\end{align}
Notice that the r.h.s is a $\CC^1$ function of $\sigma$, and thus~\eqref{eq:chngt_var} holds for all $\Vert\sigma\Vert\leq \sigma_0$. Differentiating~\eqref{eq:chngt_var} with respect to $\sigma$, evaluating at $\sigma=0$ and using~\eqref{eq:d_sigma}, we obtain
\begin{align*}
0 &=\int_{U_1} G(X) \left[D\varphi\left(X\right)D_\sigma\Psi(X,0)
 + \varphi\left(X\right) \tr\begin{pmatrix}
D^2_{X,\sigma}\Psi(X,0)
\end{pmatrix}\right]dX \nonumber\\
&=\int_{U_1} G(X) \left[D\varphi\left(X\right)D_\sigma\Psi(X,0)
 + \varphi\left(X\right) \diverg_X\left(D_\sigma\Psi(X,0)\right)\right]dX \nonumber\\
&= \int_{U}  G(X) \diverg_X\left(\varphi(X)D_\sigma\Psi(X,0)\right) dX.
\end{align*}
This can be rewritten as
\begin{align*}
 &\int_{U}  \left(g(v)+g(v_*)\right)\left[\diverg_v\left(\varphi(v,v_*)D_\sigma\psi(v,v_*,0)\right) -\diverg_{v_*}\left(\varphi(v,v_*)D_\sigma\psi(v,v_*,0)\right)\right] dv dv_* =0,
\end{align*}
and then simplified into
\begin{align}
\label{eq:forme_faible_derivee}
&\int_{U}  \left[g(v)\diverg_v\left(\varphi(v,v_*)D_\sigma\psi(v,v_*,0)\right)  -g(v_*)\diverg_{v_*}\left(\varphi(v,v_*)D_\sigma\psi(v,v_*,0)\right)\right] dv dv_* = 0,
\end{align}
which is nothing but the weak (and local) formulation of 
\begin{equation*}
\left(Dg(v)-Dg(v_*)\right)D_\sigma\psi(v,v_*,0)=0.
\end{equation*}

We now consider the case $d=3$ (the case $d=2$ being similar but simpler). We introduce $e_1,e_2:\R^3\times\R^3 \to \R^3$ which denote the columns of $D_\sigma\psi(v,v_*,0)$, that is
\begin{equation*}
D_\sigma\psi(v,v_*,0) = \begin{pmatrix}
e_1(v,v_*) & e_2(v,v_*)
\end{pmatrix}.
\end{equation*}
Thanks to~\eqref{eq:def_psi}, we know that
\begin{equation*}
\left(D\omega(v)-D\omega(v_*)\right)D_\sigma\psi(v,v_*,0)=0,
\end{equation*}
and since $e_1(v,v_*)$ and $e_2(v,v_*)$ are independent thanks to~\eqref{eq:psi_subm}, and of class $\CC^1$ since $\psi$ is of class $\CC^2$, there exists a function $\lambda\in\CC^1(U,\R_*)$ such that for all $(v,v_*)$ in $U$,
\begin{equation}
\label{eq:lambda}
e_1(v,v_*) \times e_2(v,v_*) = \lambda(v,v_*) \left(\nabla\omega(v)-\nabla\omega(v_*)\right).
\end{equation}
Using~\eqref{eq:forme_faible_derivee}, we see that, for all $i\in\{1,2\}$ and all $\varphi\in\CC^1_c(U,\R)$,
\begin{equation*}
\int_{U}  \left[g(v)\diverg_v\left(\varphi(v,v_*)e_i(v,v_*)\right)-g(v_*)\diverg_{v_*}\left(\varphi(v,v_*)e_i(v,v_*)\right)\right] dv dv_* = 0.
\end{equation*}
Since each $e_j$ is of class $\CC^1$, we can consider any component of $\varphi e_j$ instead of $\varphi$ in the above identity, which we rewrite in a more compact form using the tensor product notation, for all $i,j\in\{1,2\}$ and all $\varphi\in\CC^1_c(U,\R)$,
\begin{align*}
&\int_{U}  \left[g(v)\diverg_v\left(\varphi(v,v_*)e_j(v,v_*)\otimes e_i(v,v_*)\right) - g(v_*)\diverg_{v_*}\left(\varphi(v,v_*)e_j(v,v_*)\otimes e_i(v,v_*)\right)\right] dv dv_* = 0,
\end{align*}
which then yields
\begin{align*}
&\int_{U}  \left[g(v)\diverg_v\left(\varphi(v,v_*)\left(e_2(v,v_*)\otimes e_1(v,v_*)-e_1(v,v_*)\otimes e_2(v,v_*)\right)\right)\right. \nonumber\\
&\hspace{1.5cm } \left. - g(v_*)\diverg_{v_*}\left(\varphi(v,v_*)\left(e_2(v,v_*)\otimes e_1(v,v_*)-e_1(v,v_*)\otimes e_2(v,v_*)\right)\right)\right] dv dv_* = 0.
\end{align*}
Making use of the vectorial calculus formula
\begin{equation*}
\diverg(b\otimes a-a\otimes b)=\rot(a\times b),
\end{equation*}
and combining it with~\eqref{eq:lambda}, we end up with
\begin{align*}
&\int_{U}  \left[g(v)\rot_v\left(\varphi(v,v_*)\lambda(v,v_*) \left(\nabla\omega(v)-\nabla\omega(v_*)\right)\right)\right. \nonumber\\
&\hspace{1.5cm } \left. - g(v_*)\rot_{v_*}\left(\varphi(v,v_*)\lambda(v,v_*) \left(\nabla\omega(v)-\nabla\omega(v_*)\right)\right)\right] dv dv_* = 0.
\end{align*}
Since this identity holds for all $\varphi\in\CC^1_c(U,\R)$ and since $\lambda$ is of class $\CC^1$ and does not vanish, we obtain, for all $\varphi\in\CC^1_c(U,\R)$,
\begin{align*}
0 &= \int_{U}  \left[g(v)\rot_v\left(\varphi(v,v_*)\left(\nabla\omega(v)-\nabla\omega(v_*)\right)\right) - g(v_*)\rot_{v_*}\left(\varphi(v,v_*)\left(\nabla\omega(v)-\nabla\omega(v_*)\right)\right)\right] dv dv_* \\
 &= \int_{U}  \left(g(v)\nabla_v\varphi(v,v_*)-g(v_*)\nabla_{v_*}\varphi(v,v_*)\right)\times\left(\nabla\omega(v)-\nabla\omega(v_*)\right)dv dv_*,
\end{align*}
which is nothing but the weak (and local) formulation of
\begin{equation*}
\left(\nabla g(v) -\nabla g(v_*)\right)\times  \left(\nabla \omega(v) -\nabla \omega(v_*)\right) = 0.
\end{equation*}
\end{proof}

\begin{lemma}
\label{lem:partition_unite}
Let $d\geq 2$, $\omega\in\CC^2(\Rd,\R)$ and $g\in L^1_{loc}(\R^d,\R)$. Assume that, for all $(\bx,\by)\in A$, there exists a neighborhood $U\subset A$ of $(\bx,\by)$ such that, for all $\varphi\in\CC^1_c(U,\R)$, 
\begin{align*}
\int_{U}  \left(g(v)\nabla_v\varphi(v,v_*)-g(v_*)\nabla_{v_*}\varphi(v,v_*)\right)\times\left(\nabla\omega(v)-\nabla\omega(v_*)\right)dv dv_*=0.
\end{align*}
Then, for all $\varphi\in\CC^1_c(A,\R)$, 
\begin{align*}
\int_{U}  \left(g(v)\nabla_v\varphi(v,v_*)-g(v_*)\nabla_{v_*}\varphi(v,v_*)\right)\times\left(\nabla\omega(v)-\nabla\omega(v_*)\right)dv dv_*=0.
\end{align*}
\end{lemma}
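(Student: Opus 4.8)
The statement is a standard globalization of a family of local identities, and the natural tool is a partition of unity. The plan rests on two observations. First, $A$ is an \emph{open} subset of $\left(\Rd\right)^2$: since $\omega\in\CC^2(\Rd,\R)$, the map $(v,v_*)\mapsto\nabla\omega(v)-\nabla\omega(v_*)$ is continuous, so the defining condition $\nabla\omega(v)\neq\nabla\omega(v_*)$ cuts out an open set. Second, the functional
\begin{equation*}
T(\varphi) := \int \left(g(v)\nabla_v\varphi(v,v_*)-g(v_*)\nabla_{v_*}\varphi(v,v_*)\right)\times\left(\nabla\omega(v)-\nabla\omega(v_*)\right)dv\,dv_*
\end{equation*}
depends linearly on $\varphi$ and its first derivatives, hence is linear in $\varphi$. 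All integrals are finite because $g\in L^1_{loc}$ while $\varphi$, $\nabla\varphi$ and $\nabla\omega$ are continuous on the (compact) support of $\varphi$.

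First I would fix $\varphi\in\CC^1_c(A,\R)$ and set $K:=\supp(\varphi)$, a compact subset of the open set $A$. To each point $(\bx,\by)\in K$ the hypothesis associates a neighborhood $U_{(\bx,\by)}\subset A$ on which $T(\phi)=0$ holds for every $\phi\in\CC^1_c(U_{(\bx,\by)},\R)$. These neighborhoods form an open cover of $K$, so by compactness I would extract a finite subcover $U_1,\dots,U_n$.

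Next I would take a $\CC^\infty$ partition of unity $\{\chi_k\}_{k=1}^n$ subordinate to $\{U_k\}_{k=1}^n$, that is $\chi_k\in\CC^\infty_c(U_k,\R)$ with $0\leq\chi_k\leq 1$ and $\sum_{k=1}^n\chi_k\equiv 1$ on a neighborhood of $K$. Since $\varphi$ vanishes off $K$, this gives $\varphi=\sum_{k=1}^n\chi_k\varphi$ on all of $\left(\Rd\right)^2$, and each product $\chi_k\varphi$ lies in $\CC^1_c(U_k,\R)$ (it is $\CC^1$ as a product of a $\CC^\infty$ and a $\CC^1$ function, with support contained in $\supp(\chi_k)\subset U_k$). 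Applying the local hypothesis to each $\chi_k\varphi$ yields $T(\chi_k\varphi)=0$, and the linearity of $T$ then gives $T(\varphi)=\sum_{k=1}^n T(\chi_k\varphi)=0$, which is exactly the desired conclusion.

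The argument is essentially routine and I do not expect any genuine obstacle. The only points deserving care are the openness of $A$, which guarantees that the finite subcover and the partition of unity can be chosen inside $A$ (so that each $\chi_k\varphi$ remains an admissible test function for the local identity on $U_k$), and the linearity of $T$ in $\varphi$, which is what legitimizes splitting $\varphi$ into the finitely many pieces $\chi_k\varphi$ and summing.
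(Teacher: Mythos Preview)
Your argument is correct and is precisely the standard partition of unity argument the paper invokes (it merely cites H\"ormander's Theorem 1.4.4 without spelling out the details). Your explicit verification that $A$ is open, that $T$ is linear, and that each $\chi_k\varphi\in\CC^1_c(U_k,\R)$ fills in exactly what the paper leaves implicit.
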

\begin{proof}
The proof follows from a standard partition of unity argument (see for instance Th~1.4.4 in~\cite{Hor03}).
\end{proof}
\medskip

We now provide the
\medskip 

\noindent\textit{End of the proof of Proposition \ref{prop:weak_form}.} Consider an open set $U\subset \left(\Rd\right)^2$
and a function $\varphi\in\CC^1_c(U,\R)$. For $\varepsilon\geq 0$, we define
\begin{equation*}
U_{A^c}^\varepsilon = \left\{ (v,v_*)\in U,\ \dist\left((v,v_*),A^c\right)\leq \varepsilon \right\}.
\end{equation*} 
We then consider a mollifier $\rho$ with support included in the unit ball (of $\R^{2d}$), the mollifying sequence of functions $\rho_n$ defined as $\rho_n(v,v_*)=n^{2d}\rho(nv, nv_*)$, and for all $n\in\N^*$ large enough, the sequence of functions $\varphi_n\in\CC^1_c(U,\R)$ defined as
\begin{equation*}
\varphi_n = \left(\rho_{2n}\ast \mathds{1}_{U_{A^c}^\frac{3}{2n}}\right)\varphi .
\end{equation*}
The function $\varphi_n$ satisfies the following properties:
\begin{equation}
\label{hyp:phin_0}
\varphi_n(v,v_*)=0\ \ \forall~(v,v_*)\in U\setminus U_{A^c}^\frac{2}{n},
\end{equation}
\begin{equation}
\label{hyp:phin_phi}
\varphi_n(v,v_*)=\varphi(v,v_*)\ \ \forall~(v,v_*)\in U_{A^c}^\frac{1}{n},
\end{equation}
\begin{equation}
\label{hyp:phin_grad}
\Vert\nabla\varphi_n(v,v_*)\Vert \leq Cn\ \ \forall~(v,v_*)\in U,
\end{equation}
where the constant $C$ may depend on $\varphi$ but not on $n$, and
\begin{equation}
\label{hyp:phin_supp}
\supp(\varphi_n) \subset \supp(\varphi).
\end{equation}
We get
\begin{align*}
&\int_{U}  \left(g(v)\nabla_v\varphi(v,v_*)-g(v_*)\nabla_{v_*}\varphi(v,v_*)\right)\times\left(\nabla\omega(v)-\nabla\omega(v_*)\right)dv dv_* \\
&= \int_{U}  \left(g(v)\nabla_v\left(\varphi(v,v_*)-\varphi_n(v,v_*)\right)-g(v_*)\nabla_{v_*}\left(\varphi(v,v_*)-\varphi_n(v,v_*)\right)\right)\times\left(\nabla\omega(v)-\nabla\omega(v_*)\right)dv dv_* \\
&\quad +\int_{U}  \left(g(v)\nabla_v\varphi_n(v,v_*)-g(v_*)\nabla_{v_*}\varphi_n(v,v_*)\right)\times\left(\nabla\omega(v)-\nabla\omega(v_*)\right)dv dv_*.
\end{align*}
Thanks to~\eqref{hyp:phin_phi}, $\varphi-\varphi_n$ is a $\CC^1$ function whose compact support is included in $A$, therefore we infer from Lemma~\ref{lem:weak_form} and Lemma~\ref{lem:partition_unite} that
\begin{align*}
\int_{U}  \left(g(v)\nabla_v\left(\varphi(v,v_*)-\varphi_n(v,v_*)\right)-g(v_*)\nabla_{v_*}\left(\varphi(v,v_*)-\varphi_n(v,v_*)\right)\right)\times\left(\nabla\omega(v)-\nabla\omega(v_*)\right)dv dv_*=0.
\end{align*}
Besides, since $\nabla\omega$ is of class $\CC^1$ on $\Rd$, and since the support of $\varphi_n$ can be included in a compact subset of $\Rd$ independent of $n$ thanks to~\eqref{hyp:phin_supp}, we see that there exists a constant independent of $n$, still denoted by $C$, such that for all $(v,v_*)$ in $U^\frac{2}{n}_{A^c}\cap \supp(\varphi_n)$,
\begin{equation*}
\left\Vert \nabla\omega(v)-\nabla\omega(v_*) \right\Vert \leq \frac{C}{n}.
\end{equation*}
Using~\eqref{hyp:phin_0} and~\eqref{hyp:phin_grad}, we then estimate
\begin{align}
\label{eq:lim_A}
&\left\Vert \int_{U}  \left(g(v)\nabla_v\varphi(v,v_*)-g(v_*)\nabla_{v_*}\varphi(v,v_*)\right)\times\left(\nabla\omega(v)-\nabla\omega(v_*)\right)dv dv_* \right\Vert  \nonumber\\
&=\left\Vert \int_{U}  \left(g(v)\nabla_v\varphi_n(v,v_*)-g(v_*)\nabla_{v_*}\varphi_n(v,v_*)\right)\times\left(\nabla\omega(v)-\nabla\omega(v_*)\right)dv dv_* \right\Vert  \nonumber\\
&\leq  C\int_{U^\frac{2}{n}_{A^c}\cap\, \supp(\varphi)}  \left(\vert g(v)\vert +\vert g(v_*)\vert\right) n\left\Vert \nabla\omega(v)-\nabla\omega(v_*)\right\Vert dv dv_*   \nonumber\\
&\leq C \int_{\left(U^\frac{2}{n}_{A^c}\setminus A^c\right)\cap\, \supp(\varphi)}  \left(\vert g(v)\vert +\vert g(v_*)\vert\right) dv dv_* .
\end{align}
Because the boundary of $A$ is of Lebesgue measure zero and $g\in L^1_{loc}(\R^d,\R)$, one can use Lebesgue's dominated convergence theorem 
 when $n$ goes to infinity. We obtain
\begin{equation*}
 \int_{U}  \left(g(v)\nabla_v\varphi(v,v_*)-g(v_*)\nabla_{v_*}\varphi(v,v_*)\right)\times\left(\nabla\omega(v)-\nabla\omega(v_*)\right)dv dv_* = 0,
\end{equation*}
and this ends the proof of Proposition~\ref{prop:weak_form}.
\hfill \qed
\bigskip

Before using it to conclude the proof of Theorem~\ref{th:22version2}, we need two additional lemmas.

\begin{lemma}
\label{lem:indep}
Let $p,q\in\CC^1(\Rd,\R)$ be such that the family $(1,p,q)$ is linearly independent in $\CC^1(\Rd,\R)$. If there exist $a,b,c\in\R$ such that for all $v,v_*\in\Rd$,
\begin{equation}
\label{hyp_fq}
a\left(p(v)-p(v_*)\right)^2+b\left(p(v)-p(v_*)\right)\left(q(v)-q(v_*)\right)+c\left(q(v)-q(v_*)\right)^2=0,
\end{equation}
then $a=b=c=0$.
\end{lemma}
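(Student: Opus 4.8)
The plan is to argue by contradiction. Suppose $(a,b,c)\neq(0,0,0)$; I will exhibit a nontrivial linear combination $\lambda p+\mu q$ with $(\lambda,\mu)\neq(0,0)$ that is \emph{constant} on $\Rd$, which immediately contradicts the linear independence of $(1,p,q)$ in $\CC^1(\Rd,\R)$. The key observation is that~\eqref{hyp_fq} says precisely that the binary quadratic form $\phi(X,Y)=aX^2+bXY+cY^2$ vanishes at every point of the form $(X,Y)=(p(v)-p(v_*),\,q(v)-q(v_*))$. Rather than differentiate (see the last paragraph for why that fails), I would exploit the global functional identity directly, studying $\phi$ through its factorization over $\R$, which is governed by the sign of the discriminant $\Delta=b^2-4ac$.

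The two degenerate cases are straightforward. If $\Delta<0$, then necessarily $a\neq0$ and $\phi$ is a definite form, so $\phi(X,Y)=0$ with $X,Y\in\R$ forces $X=Y=0$; applied to every pair $(v,v_*)$ this gives $p(v)=p(v_*)$ and $q(v)=q(v_*)$, i.e. both $p$ and $q$ are constant, contradicting the independence of $(1,p,q)$. If $\Delta=0$, then $\phi$ is a perfect square: when $a\neq0$ one has $\phi(X,Y)=a\bigl(X+\tfrac{b}{2a}Y\bigr)^2$, so~\eqref{hyp_fq} yields that $p+\tfrac{b}{2a}q$ is constant, a nontrivial relation; when $a=0$ then $b=0$ and $\phi=cY^2$ with $c\neq0$, forcing $q$ constant. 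In every subcase we reach the desired contradiction.

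The remaining case $\Delta>0$ is where I expect the real work to be. Here $\phi$ factors into two \emph{non-proportional} real linear forms, $\phi(X,Y)=(\lambda_1X+\mu_1Y)(\lambda_2X+\mu_2Y)$ with each $(\lambda_i,\mu_i)\neq0$ and the two coefficient vectors not proportional. Setting $r_i:=\lambda_i p+\mu_i q$, the identity~\eqref{hyp_fq} becomes
\[
\bigl(r_1(v)-r_1(v_*)\bigr)\bigl(r_2(v)-r_2(v_*)\bigr)=0\qquad\text{for all }v,v_*\in\Rd.
\]
The crux is to deduce from this that one of $r_1,r_2$ is constant; this elementary but decisive step is the main obstacle. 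I would argue as follows: suppose $r_2$ is not constant and pick $w_1,w_2$ with $r_2(w_1)\neq r_2(w_2)$. For an arbitrary $v\in\Rd$, the value $r_2(v)$ must differ from at least one of $r_2(w_1),r_2(w_2)$; applying the identity to the pair $(v,w_k)$ for such a $k$ forces $r_1(v)=r_1(w_k)$. Since applying it to $(w_1,w_2)$ gives $r_1(w_1)=r_1(w_2)$, we conclude $r_1(v)=r_1(w_1)$ for every $v$, i.e. $r_1$ is constant. As $r_1=\lambda_1p+\mu_1q$ with $(\lambda_1,\mu_1)\neq0$, this again contradicts the independence of $(1,p,q)$.

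Finally, it is worth noting why I avoid the tempting shortcut of differentiating~\eqref{hyp_fq} in $v$ near the diagonal $v=v_*$: that produces the pointwise identity $a\,\nabla p\otimes\nabla p+\tfrac b2(\nabla p\otimes\nabla q+\nabla q\otimes\nabla p)+c\,\nabla q\otimes\nabla q=0$, which yields $a=b=c=0$ only when $\nabla p$ and $\nabla q$ are linearly independent at some point. This can genuinely fail (e.g. if $p$ and $q$ are both functions of a single scalar, so their gradients are everywhere parallel), even while $(1,p,q)$ remains independent. The factorization argument above sidesteps this entirely, using only the global identity and the linear independence hypothesis, and in fact requires no smoothness of $p,q$ at all.
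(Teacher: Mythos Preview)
Your proof is correct and takes a genuinely different route from the paper's. The paper differentiates~\eqref{hyp_fq} once in $v_i$ and once in $v_{*j}$ (a \emph{mixed} derivative, not the second $v$-derivative near the diagonal that you rule out in your last paragraph). This yields
\[
(2a\,\partial_jp(v_*)+b\,\partial_jq(v_*))\,\partial_ip(v)+(b\,\partial_jp(v_*)+2c\,\partial_jq(v_*))\,\partial_iq(v)=0,
\]
so for each fixed $v_*$ and $j$ the combination $(2a\,\partial_jp(v_*)+b\,\partial_jq(v_*))\,p+(b\,\partial_jp(v_*)+2c\,\partial_jq(v_*))\,q$ has vanishing gradient, hence is constant; independence of $(1,p,q)$ kills both coefficients, and a second pass gives $a=b=c=0$. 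So the paper's differentiation trick \emph{does} succeed---it just is not the diagonal Hessian you considered. Your approach instead reads~\eqref{hyp_fq} as the vanishing of the binary quadratic form $\phi(X,Y)=aX^2+bXY+cY^2$ on the range of $(p(v)-p(v_*),\,q(v)-q(v_*))$ and factors $\phi$ according to the sign of the discriminant, reducing every case to exhibiting a nontrivial $\lambda p+\mu q$ that is constant. This is more elementary and, as you note, requires no smoothness of $p,q$ at all; the $\Delta>0$ step (from $(r_1(v)-r_1(v_*))(r_2(v)-r_2(v_*))=0$ to one of the $r_i$ being constant) is a clean self-contained argument. The paper's route is shorter and avoids the case split, but it genuinely uses the $\CC^1$ hypothesis that your argument shows to be superfluous.
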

\begin{proof}
We consider $i,j\in\{1,\ldots,d\}$ and apply the partial derivatives $\frac{\pa}{\pa_{v_i}}$ and $\frac{\pa}{\pa_{v_{*j}}}$ to~\eqref{hyp_fq}, which yields
\begin{equation*}
\left(2a\pa_jp(v_*)+b\pa_jq(v_*)\right)\pa_ip(v) + \left(2c\pa_jq(v_*)+b\pa_jp(v_*)\right)\pa_iq(v).
\end{equation*}
Hence, for all $v_*\in\Rd$ and all $j\in\{1,\ldots,d\}$, the function
\begin{equation*}
v\mapsto \left(2a\pa_jp(v_*)+b\pa_jq(v_*)\right)p(v) + \left(2c\pa_jq(v_*)+b\pa_jp(v_*)\right)q(v)
\end{equation*}
is constant on $\Rd$, and since $(1,p,q)$ is linearly independent, we must have, for all $v_*\in\Rd$,
\begin{equation*}
2a\pa_jp(v_*)+b\pa_jq(v_*)=0 \quad\text{and}\quad 2c\pa_jq(v_*)+b\pa_jp(v_*)=0.
\end{equation*}
The same argument then yields that $a$, $b$ and $c$ must be zero.
\end{proof}

\begin{lemma}
\label{lem:indep_compact}
Let $p,q\in\CC^0(\Rd,\R)$ be such that the family $(1,p,q)$ is linearly independent in $\CC^0(\Rd,\R)$. Then, there exists a compact subset $K\subset \Rd$ such that $(1,p,q)$ is linearly independent on $K$.
\end{lemma}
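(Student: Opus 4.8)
The goal is to upgrade linear independence of $(1,p,q)$ in the function space $\CC^0(\Rd,\R)$—a statement about global behaviour on the punctured space—to linear independence when restricted to a single compact set $K$. The plan is to argue by contraposition: if $(1,p,q)$ fails to be linearly independent on every compact subset of $\Rd$, then I would try to show they are in fact dependent globally, contradicting the hypothesis.

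First I would fix an exhaustion of $\Rd$ by an increasing sequence of compact sets $K_n$ with $K_n \subset \mathrm{int}(K_{n+1})$ and $\bigcup_n K_n = \Rd$ (this is possible since $\Rd$ is an open subset of $\R^d$, hence locally compact and $\sigma$-compact). Under the contrapositive assumption, for each $n$ there exist coefficients $(a_n,b_n,c_n)\neq 0$ with $a_n + b_n p + c_n q \equiv 0$ on $K_n$. By homogeneity I may normalise so that $(a_n,b_n,c_n)$ lies on the unit sphere $\Sphere^2 \subset \R^3$. By compactness of the sphere, I extract a convergent subsequence $(a_{n_k},b_{n_k},c_{n_k}) \to (a,b,c)$ with $(a,b,c)\neq 0$ (being a limit of unit vectors, it has norm $1$).

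The key step is then to check that the limiting relation $a + b\,p + c\,q \equiv 0$ holds on all of $\Rd$. Given any point $v\in\Rd$, it lies in $K_N$ for some $N$, hence in $K_{n_k}$ for all $k$ with $n_k \geq N$; for such $k$ we have $a_{n_k} + b_{n_k}p(v) + c_{n_k}q(v) = 0$, and passing to the limit (using continuity of the evaluation and convergence of the coefficients) gives $a + b\,p(v) + c\,q(v) = 0$. Since $v$ was arbitrary and $(a,b,c)\neq 0$, this exhibits a nontrivial global linear dependence of $(1,p,q)$ in $\CC^0(\Rd,\R)$, contradicting the hypothesis. This contradiction proves the lemma.

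I do not expect any serious obstacle here: the argument is a routine compactness-of-the-coefficient-sphere plus diagonal/exhaustion device, and the only points requiring a line of care are the existence of the compact exhaustion of the open set $\Rd$ and the harmless normalisation of the coefficient vectors. The mild subtlety worth stating explicitly is that the nested structure $K_n \subset K_{n+1}$ guarantees that once $v$ enters some $K_N$ it stays in all later $K_{n_k}$, which is exactly what licenses passing to the limit pointwise.
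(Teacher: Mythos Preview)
Your argument is correct and essentially identical to the paper's: both exhaust $\Rd$ by an increasing sequence of compacts, normalise the dependence coefficients to the unit sphere, extract a convergent subsequence, and pass to the limit to obtain a global nontrivial relation. The only cosmetic difference is that the paper phrases the limit via uniform convergence of $\theta_n = a_n + b_n p + c_n q$ on compacts, while you pass to the limit pointwise; either works.
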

\begin{proof}
We argue by contradiction, and assume that there exists an increasing sequence of compacts subsets $K_n\subset \Rd$, such that $\cup_{n\in\N} K_n= \Rd$, and such that $(1,p,q)$ is linearly dependent on each $K_n$. Hence, for all $n\in\N$, there exist $(a_n,b_n,c_n)\in\R^3\setminus\{0\}$, such that for all $n\in\N$,
\begin{equation*}
\theta_n = a_n + b_np + c_nq = 0 \quad \text{on } K_n.
\end{equation*}
Without loss of generality, we can assume that $\left\Vert (a_n,b_n,c_n)\right\Vert=1$ for all $n$. Therefore, up to considering a subsequence, we see that $(a_n,b_n,c_n)_n$ converges towards some $(a_\infty,b_\infty,c_\infty)\neq 0$. This yields that the sequence of functions $(\theta_n)_n$ converges, uniformly on every compact of $\Rd$, towards $\theta_\infty=a_\infty + b_\infty p + c_\infty q$. Besides, on every compact  of $\Rd$, $\theta_n=0$ for $n$ large enough. Thus $\theta_\infty=0$ on $\Rd$, and $(a_\infty,b_\infty,c_\infty)\neq 0$ implies that $(1,p,q)$ is linearly dependent in $\Rd$.
\end{proof}

We are now ready to conclude the proof of the main theorem.
\medskip

\noindent\textit{End of the proof of Theorem~\ref{th:22version2}.}
Let $\alpha,\beta\in\CC^1_c(\Rd,\R)$. Thanks to  Proposition~\ref{prop:weak_form}, with $\varphi(v,v_*)=\alpha(v)\beta(v_*)$, we see that
\begin{align*}
\int_{\R^d}\int_{\R^d}  \left(g(v)\nabla\alpha(v)\beta(v_*)-g(v_*)\alpha(v)\nabla\beta(v_*)\right)\times\left(\nabla\omega(v)-\nabla\omega(v_*)\right)dv dv_* = 0,
\end{align*}
that is, for all $i,j\in\{1,\ldots,d\}$, $i\neq j$,
\begin{align*}
&\int_{\R^d}\int_{\R^d}  \left(\left(g(v)\pa_i\alpha(v)\beta(v_*)-g(v_*)\alpha(v)\pa_i\beta(v_*)\right)\left(\pa_j\omega(v)-\pa_j\omega(v_*)\right) \right. \\
&\hspace{2cm}\left. -\left(g(v)\pa_j\alpha(v)\beta(v_*)-g(v_*)\alpha(v)\pa_j\beta(v_*)\right)\left(\pa_i\omega(v)-\pa_i\omega(v_*)\right) \right)dv dv_*  =0.
\end{align*}
Regrouping the terms differently, we get
\begin{align}
\label{eq:int_1}
&\int_{\R^d} \beta(v_*)dv_* \int_{\R^d} g(v)\left(\pa_i\alpha(v)\pa_j\omega(v)-\pa_j\alpha(v)\pa_i\omega(v)\right)dv \nonumber\\
&+\int_{\R^d} \beta(v_*)\pa_i\omega(v_*)dv_* \int_{\R^d} g(v)\pa_j\alpha(v)dv - \int_{\R^d} \beta(v_*)\pa_j\omega(v_*)dv_* \int_{\R^d} g(v)\pa_i\alpha(v)dv \nonumber\\
&-\int_{\R^d} g(v_*)\pa_i\beta(v_*)dv_* \int_{\R^d} \alpha(v)\pa_j\omega(v)dv + \int_{\R^d} g(v_*)\pa_j\beta(v_*)dv_* \int_{\R^d} \alpha(v)\pa_i\omega(v)dv \nonumber\\
&+\int_{\R^d} g(v_*)\left(\pa_i\beta(v_*)\pa_j\omega(v_*)-\pa_j\beta(v_*)\pa_i\omega(v_*)\right)dv_* \int_{\R^d} \alpha(v)dv =0.
\end{align}
Since $\omega\in\CC^2(\Rd,\R)$, the same computation remains valid with $\beta\pa_i\omega$ and $\beta\pa_j\omega$ instead of $\beta$, which gives
\begin{align}
\label{eq:int_i}
&\int_{\R^d} \beta(v_*)\pa_i\omega(v_*)dv_* \int_{\R^d} g(v)\left(\pa_i\alpha(v)\pa_j\omega(v)-\pa_j\alpha(v)\pa_i\omega(v)\right)dv \nonumber\\
&+\int_{\R^d} \beta(v_*)\left(\pa_i\omega(v_*)\right)^2dv_* \int_{\R^d} g(v)\pa_j\alpha(v)dv - \int_{\R^d} \beta(v_*)\pa_i\omega(v_*)\pa_j\omega(v_*)dv_* \int_{\R^d} g(v)\pa_i\alpha(v)dv \nonumber\\
&-\int_{\R^d} g(v_*)\left(\pa^2_{ii}\omega(v_*)\beta(v_*)+\pa_i\omega(v_*)\pa_i\beta(v_*)\right)dv_* \int_{\R^d} \alpha(v)\pa_j\omega(v)dv \nonumber\\
&+ \int_{\R^d} g(v_*)\left(\pa^2_{ij}\omega(v_*)\beta(v_*)+\pa_i\omega(v_*)\pa_j\beta(v_*)\right)dv_* \int_{\R^d} \alpha(v)\pa_i\omega(v)dv \nonumber\\
&+\int_{\R^d} g(v_*)\left(\left(\pa^2_{ii}\omega(v_*)\beta(v_*)+\pa_i\omega(v_*)\pa_i\beta(v_*)\right)\pa_j\omega(v_*)\right. \nonumber\\
&\hspace{2cm} \left.-\left(\pa^2_{ij}\omega(v_*)\beta(v_*)+\pa_i\omega(v_*)\pa_j\beta(v_*)\right)\pa_i\omega(v_*)\right)dv_* \int_{\R^d} \alpha(v)dv =0,
\end{align}
and
\begin{align}
\label{eq:int_j}
&\int_{\R^d} \beta(v_*)\pa_j\omega(v_*)dv_* \int_{\R^d} g(v)\left(\pa_i\alpha(v)\pa_j\omega(v)-\pa_j\alpha(v)\pa_i\omega(v)\right)dv \nonumber \\
&+\int_{\R^d} \beta(v_*)\pa_i\omega(v_*)\pa_j\omega(v_*)dv_* \int_{\R^d} g(v)\pa_j\alpha(v)dv - \int_{\R^d} \beta(v_*)(\pa_j\omega(v_*))^2dv_* \int_{\R^d} g(v)\pa_i\alpha(v)dv \nonumber\\
&-\int_{\R^d} g(v_*)\left(\pa^2_{ij}\omega(v_*)\beta(v_*)+\pa_j\omega(v_*)\pa_i\beta(v_*)\right)dv_* \int_{\R^d} \alpha(v)\pa_j\omega(v)dv \nonumber\\
&+ \int_{\R^d} g(v_*)\left(\pa^2_{jj}\omega(v_*)\beta(v_*)+\pa_j\omega(v_*)\pa_j\beta(v_*)\right)dv_* \int_{\R^d} \alpha(v)\pa_i\omega(v)dv \nonumber\\
&+\int_{\R^d} g(v_*)\left(\left(\pa^2_{ij}\omega(v_*)\beta(v_*)+\pa_j\omega(v_*)\pa_i\beta(v_*)\right)\pa_j\omega(v_*)\right. \nonumber\\
&\hspace{2cm} \left.-\left(\pa^2_{jj}\omega(v_*)\beta(v_*)+\pa_j\omega(v_*)\pa_j\beta(v_*)\right)\pa_i\omega(v_*)\right)dv_* \int_{\R^d} \alpha(v)dv =0.
\end{align}

The identities~\eqref{eq:int_1}-\eqref{eq:int_j} can be seen as a linear system $Mu=v$, where
\renewcommand{\arraystretch}{1.5}
\begin{equation*}
M=\begin{pmatrix}
\int_{\R^d} \beta(v_*)dv_* & \int_{\R^d}\partial_i\omega(v_*)\beta(v_*)dv_* & \int_{\R^d}\partial_j\omega(v_*)\beta(v_*)dv_* \\
\int_{\R^d}\partial_i\omega(v_*)\beta(v_*)dv_* & \int_{\R^d}(\partial_i\omega(v_*))^2\beta(v_*)dv_* & \int_{\R^d}\partial_i\omega(v_*)\partial_j\omega(v_*)\beta(v_*)dv_* \\
\int_{\R^d}\partial_j\omega(v_*)\beta(v_*)dv_* & \int_{\R^d}\partial_i\omega(v_*)\partial_j\omega(v_*)\beta(v_*)dv_* & \int_{\R^d}(\partial_j\omega(v_*))^2\beta(v_*)dv_* 
\end{pmatrix},
\end{equation*}
\begin{equation*}
u=\begin{pmatrix}
\int_{\R^d} g(v)\left(\pa_i\alpha(v)\pa_j\omega(v)-\pa_j\alpha(v)\pa_i\omega(v)\right)dv \\
\int_{\R^d} g(v)\pa_j\alpha(v)dv \\
-\int_{\R^d} g(v)\pa_i\alpha(v)dv
\end{pmatrix},
\end{equation*}
and 
\begin{equation*}
v=\begin{pmatrix}
\int_{\R^d}\left(v_1+v_2\partial_i\omega(v)+v_3\partial_j\omega(v)\right)\alpha(v)dv\\
\int_{\R^d}\left(v_4+v_5\partial_i\omega(v)+v_6\partial_j\omega(v)\right)\alpha(v)dv\\
\int_{\R^d}\left(v_7+v_8\partial_i\omega(v)+v_9\partial_j\omega(v)\right)\alpha(v)dv
\end{pmatrix}.
\end{equation*}
Explicit expressions of the constants $(v_k)_{1\leq k\leq 9}$ could be obtained from~\eqref{eq:int_1}-\eqref{eq:int_j}, but they are not needed here.

We now consider $i\neq j$, so that $\{1,\pa_i\omega,\pa_j\omega\}$ are linearly independent on $\Rd$ by assumption~\eqref{hyp:linear_indep_rep}. Thanks to Lemma~\ref{lem:indep_compact}, there exists a compact subset $K\subset\Rd$ on which they are linearly independent. Taking for $\beta$ a nonnegative function whose support contains $K$, we get that $M$ is an invertible Gram matrix. Cramer's rule then yields the existence of constants $\left(c_k\right)_{1\leq k\leq 9}$ (again we could get formulas for them, but their explicit expressions will not be needed here) such that, for all $\alpha\in\CC^1_c(\Rd,\R)$,
\begin{equation*}
\label{eq:sol_lin_1}
\int_{\R^d} g(v)\left(\pa_i\alpha(v)\pa_j\omega(v)-\pa_j\alpha(v)\pa_i\omega(v)\right)dv = -\int_{\R^d}\left(c_1+c_2\partial_i\omega(v)+c_3\partial_j\omega(v)\right)\alpha(v)dv,
\end{equation*}
\begin{equation}
\label{eq:sol_lin_2}
\int_{\R^d} g(v)\pa_i\alpha(v)dv = -\int_{\R^d}\left(c_4+c_5\partial_i\omega(v)+c_6\partial_j\omega(v)\right)\alpha(v)dv,
\end{equation}
\begin{equation}
\label{eq:sol_lin_3}
\int_{\R^d} g(v)\pa_j\alpha(v)dv = -\int_{\R^d}\left(c_7+c_8\partial_i\omega(v)+c_9\partial_j\omega(v)\right)\alpha(v)dv.
\end{equation}
Using \eqref{eq:sol_lin_2}-\eqref{eq:sol_lin_3}, we see that $g$ is actually $\CC^1$ on $\Rd$ and that, for all $v\in\Rd$,
\begin{equation}
\label{eq:pa_g}
\pa_i g(v) = c_4+c_5\partial_i\omega(v)+c_6\partial_j\omega(v) \qquad \text{and} \qquad \pa_j g(v) = c_7+c_8\partial_i\omega(v)+c_9\partial_j\omega(v).
\end{equation}
Since $g$ is $\CC^1$,
\begin{equation*}
\left(\nabla g(v) -\nabla g(v_*)\right)\times  \left(\nabla \omega(v) -\nabla \omega(v_*)\right) = 0
\end{equation*}
holds in a strong sense on $\left(\Rd\right)^2$, which yields, for all $v,v_*\in\Rd$
\begin{equation}
\label{eq:prod_vect_coord}
\left(\pa_ig(v)-\pa_ig(v_*)\right)\left(\pa_j\omega(v)-\pa_j\omega(v_*)\right)-\left(\pa_jg(v)-\pa_jg(v_*)\right)\left(\pa_i\omega(v)-\pa_i\omega(v_*)\right)=0.
\end{equation}
Plugging~\eqref{eq:pa_g} into~\eqref{eq:prod_vect_coord}, we obtain, still for all $v,v_*\in\Rd$,
\begin{equation*}
-c_8\left(\pa_i\omega(v)-\pa_i\omega(v_*)\right)^2 + (c_5-c_9)\left(\pa_i\omega(v)-\pa_i\omega(v_*)\right)\left(\pa_j\omega(v)-\pa_j\omega(v_*)\right) + c_6\left(\pa_j\omega(v)-\pa_j\omega(v_*)\right)^2=0.
\end{equation*}
Using once more the independence assumption~\eqref{hyp:linear_indep_rep}, we can then apply Lemma~\ref{lem:indep} and get
\begin{equation*}
c_6=c_8=0 \qquad\text{and}\qquad c_5=c_9.
\end{equation*}
Going back to~\eqref{eq:pa_g}, we get for all $v\in \Rd$:
\begin{equation}
\label{eq:pa_g_simplified}
\pa_i g(v) = c_4+c_5\partial_i\omega(v) \qquad \text{and} \qquad \pa_j g(v) = c_7+c_5\partial_j\omega(v),
\end{equation}
which, if $d=2$, allows us to conclude that there exists constants $b=(c_4,c_7)\in\R^d$ and $c=c_5\in\R$ such that, for all $v\in\Rd$,
\begin{equation*}
\nabla g(v)=b+c\nabla\omega(v).
\end{equation*}
If $d=3$, we assume without loss of generality that $i=1$ and $j=2$. We must have that either $\{1,\pa_1\omega,\pa_3\omega\}$ or $\{1,\pa_2\omega,\pa_3\omega\}$ are linearly independent on $\Rd$. Indeed, if both families were linearly dependent, using that $\{1,\pa_1\omega,\pa_2\omega\}$ are linearly independent, we would get
\begin{equation*}
\kappa_1 +\kappa_2\pa_1\omega = \pa_3\omega = \kappa_3 +\kappa_4\pa_2\omega,
\end{equation*}
for some $\kappa_1,\ldots,\kappa_4$ not all equal to zero, but this would then yield that $\{1,\pa_1\omega,\pa_2\omega\}$ are linearly dependent. Therefore, we can repeat the same argument with either $\{1,\pa_1\omega,\pa_3\omega\}$ or $\{1,\pa_2\omega,\pa_3\omega\}$, and infer also in the case $d=3$ that there exists $b\in\R^d$ and $c\in\R$ such that, for all $v\in\Rd$,
\begin{equation*}
\nabla g(v)=b+c\nabla\omega(v).
\end{equation*}
Hence there also exists $a\in\R$ such that, for all $v\in\Rd$,
\begin{equation*}
g(v)=a+b\cdot v +c\omega(v).
\end{equation*}
\hfill $\qed$

\bigskip 

We now turn to the proof of Theorem \ref{th:22}. This proof is quite short since it uses many elements of the proof of Theorem \ref{th:22version2}.
\medskip

\noindent\textit{Proof of Theorem~\ref{th:22}.} We use the notations of Lemma \ref{lem:para_var}. We consider $(v,v_*) \in A$, and recall equation~\eqref{eq:def_psi}: for all $\sigma$ small enough,
\begin{equation*}
 \omega(v)+\omega(v_*)=\omega(v-\psi(v,v_*,\sigma))+\omega(v_*+\psi(v,v_*,\sigma)).
\end{equation*}
After differentiation of this identity with respect to $\sigma$ at point $0$, we get the formula
$$ (\nabla \omega(v) - \nabla \omega(v_*))\, D_{\sigma}\psi(v,v_*,0) = 0 .$$
It is clear that equation~\eqref{cond:22} is satisfied by $g$. Differentiating now this equation with respect to $\sigma$ at point $0$, we end up with 
$$ (\nabla g(v) - \nabla g(v_*)) D_{\sigma}\psi(v,v_*,0) = 0 . $$
Remembering that (thanks to Lemma \ref{lem:para_var})
\begin{equation*}
\rank(D_\sigma\psi(v,v_*,0))=d-1,
\end{equation*}
we end up with the following formula (which holds for any $(v,v_*) \in A$): 
\begin{equation}\label{stf}
 (\nabla \omega(v) - \nabla \omega(v_*)) \times (\nabla g(v) - \nabla g(v_*)) = 0 . 
 \end{equation}
The same also immediately holds when $(v,v_*) \in A^c$, so that 
(\ref{stf}) holds for all $v,v_*$. This is the strong form of
Proposition \ref{prop:weak_form}, which of course implies the weak formulation, that is Proposition \ref{prop:weak_form} itself. 

The rest of the proof is then identical to that of Theorem \ref{th:22version2}.  
\hfill $\qed$

\section{Proof of Theorem \ref{th:21}} \label{sec3}

We start the proof of Theorem \ref{th:21} by the establishment of the following

\begin{lemma}
\label{lem:diffeo21}
Let $d\geq 2$ and $\omega\in\CC^1(\R^d,\R)$ such that $\omega(0)=0$, $\nabla\omega(0)=0$ and $\nabla\omega(v)\neq 0$ for all $v\neq 0$. For every $\bx\in\Rd$, there exists a bounded neighborhood $U\subset \Rd$, a neighborhood $V\subset\R^{d-1}$ of $0$ and a function $\psi=\psi(v,\sigma)\in\CC^1(\R^d\times\R^{d-1},\R^d)$ such that, for all $v$ in $U$ and all $\sigma$ in $V$,
\begin{equation*}
\psi(v,0)=0 \quad \text{and}\quad \omega(v)+\omega(\psi(v,\sigma))=\omega(v+\psi(v,\sigma)),
\end{equation*}
and such that, for all $v$ in $U$,
\begin{equation*}
\rank\left(D_\sigma\psi(v,0)\right)=d-1.
\end{equation*}
\end{lemma}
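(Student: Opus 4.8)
The plan is to mimic the proof of Lemma~\ref{lem:para_var}, applying the implicit function theorem to the relevant constraint function, but now the three-wave resonance condition $\omega(v)+\omega(z)=\omega(v+z)$ replaces the four-wave one. First I would introduce the function
\begin{equation*}
\Phi:\ (v,z)\mapsto \omega(v)+\omega(z)-\omega(v+z),
\end{equation*}
defined on a neighborhood of $(\bx,0)$ in $\R^d\times\R^d$. Since $\omega(0)=0$, one has $\Phi(v,0)=\omega(v)+\omega(0)-\omega(v)=0$ for all $v$, so the point $(\bx,0)$ lies on the zero set. The key computation is the gradient of $\Phi$ with respect to the second variable $z$ evaluated at $z=0$:
\begin{equation*}
\nabla_z\Phi(\bx,0)=\nabla\omega(0)-\nabla\omega(\bx)=-\nabla\omega(\bx),
\end{equation*}
using the hypothesis $\nabla\omega(0)=0$. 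Since $\bx\in\Rd$ means $\bx\neq 0$, the assumption $\nabla\omega(v)\neq 0$ for $v\neq 0$ guarantees $\nabla_z\Phi(\bx,0)\neq 0$.

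Next I would pick a coordinate $i\in\{1,\ldots,d\}$ with $\pa_i\omega(\bx)\neq 0$, so that $\pa_{z_i}\Phi(\bx,0)\neq 0$, and invoke the implicit function theorem: there exist a bounded neighborhood $U\subset\Rd$ of $\bx$, a neighborhood $V\subset\R^{d-1}$ of $0$, and a $\CC^1$ function $h$ such that, near $(\bx,0)$,
\begin{equation*}
\Phi(v,z)=0 \quad\Leftrightarrow\quad z_i=h(v,z_{i'}),
\end{equation*}
where $z_{i'}$ collects the remaining $d-1$ components. I would then define $\psi$ component-wise exactly as in Lemma~\ref{lem:para_var}, setting $\psi_j(v,\sigma)=\sigma_j$ for $j<i$, $\psi_i(v,\sigma)=h(v,\sigma)$, and $\psi_j(v,\sigma)=\sigma_{j-1}$ for $j>i$. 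The condition $\psi(v,0)=0$ follows because $z=0$ solves $\Phi(v,z)=0$ (indeed $\Phi(v,0)\equiv 0$), forcing $h(v,0)=0$ by uniqueness in the implicit function theorem; and the resonance identity $\omega(v)+\omega(\psi(v,\sigma))=\omega(v+\psi(v,\sigma))$ holds by construction since $\psi(v,\sigma)$ lies on the zero set of $\Phi$. Finally, the rank condition $\rank(D_\sigma\psi(v,0))=d-1$ is automatic from the block structure of $\psi$: its $\sigma$-derivative contains the $(d-1)\times(d-1)$ identity block coming from the components $j\neq i$, which already has full rank $d-1$.

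The main point requiring care (rather than a genuine obstacle) is verifying that the neighborhood $U$ can be taken inside $\Rd$ and bounded, and that $\omega\in\CC^1$ suffices to produce a $\CC^1$ function $\psi$ (here one only gets $\CC^1$ regularity, as opposed to the $\CC^2$ in Lemma~\ref{lem:para_var}, consistent with the weaker hypothesis $\omega\in\CC^1$). Since $\bx\neq 0$ and $\Phi$ is $\CC^1$ near $(\bx,0)$, shrinking $U$ if necessary keeps it away from $0$ and bounded, so no differentiability issue at the origin arises. The whole argument is thus a direct transcription of the earlier lemma with the two-particle constraint replaced by the single-particle-splitting constraint, the essential new input being that $\nabla\omega(0)=0$ is precisely what makes $\nabla_z\Phi(\bx,0)$ equal to $-\nabla\omega(\bx)$ and hence nonzero.
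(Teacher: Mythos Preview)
Your proof is correct and follows essentially the same approach as the paper: define the constraint function, observe that its $z$-gradient at $(\bx,0)$ equals $-\nabla\omega(\bx)\neq 0$, apply the implicit function theorem to solve for one coordinate of $z$, and assemble $\psi$ exactly as in Lemma~\ref{lem:para_var}. Your write-up is in fact slightly more detailed than the paper's, which omits the explicit computation of $\nabla_z\Phi(\bx,0)$ and the verification of $\psi(v,0)=0$ and the rank condition.
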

\begin{proof}
The proof is similar to the one of Lemma~\ref{lem:para_var}. We consider $\Psi(v,z):=\omega(v+z)-\omega(v)-\omega(z)$, and notice that there exists $i\in\{1,\ldots,d\}$ such that $\partial_i\omega(\bx)\neq 0$, together with $\Psi(\bx,0)=0$, so that we can apply the implicit function theorem and get the existence of a $\CC^1$ function $h$ satisfying
\begin{equation*}
\Psi(v,z)=0 \quad \Leftrightarrow \quad z_i=h(v,z_{i'}),
\end{equation*}
in a neighborhood of $(\bx,0)$, where $z_{i'}=(z_{1},\ldots,z_{i-1},z_{i+1},\ldots,z_{d})$. We then conclude by considering the function $\psi$ defined component-wise by
\begin{equation*}
\psi_j(v,\sigma)=
\left\{\begin{aligned}
&\sigma_j \quad &j<i ,\\
&h(v,\sigma) \quad &j=i , \\
&\sigma_{j-1} \quad &j>i.
\end{aligned}\right.
\end{equation*}
\end{proof}

The following result is a significant intermediate step in the proof of Theorem \ref{th:21}:

\begin{proposition}
\label{prop:mu}
Under the assumptions of Theorem~\ref{th:21}, consider $\tilde g:v\mapsto g(v)-\nabla g(0)\cdot v$. There exists a function $\mu\in\CC(\omega(\R),\R)\cap\CC^1(\omega(\R)\setminus\{0\},\R)$, such that for all $v\in\R^d$,
\begin{equation*}
\tilde g(v)=\mu(\omega(v)).
\end{equation*}
\end{proposition}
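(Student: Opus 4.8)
The goal is to show that, after subtracting the linear part $\nabla g(0)\cdot v$, the remaining function $\tilde g$ depends on $v$ only through $\omega(v)$. The natural strategy mirrors the four-waves proof: exploit the condition~\eqref{cond:21} along the surface $\{\omega(v')+\omega(v'')=\omega(v'+v'')\}$, which is locally parametrized by $\psi$ from Lemma~\ref{lem:diffeo21}, and differentiate to obtain a pointwise gradient constraint on $g$.

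\medskip

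First I would set $v''=\psi(v',\sigma)$ in the parametrization of Lemma~\ref{lem:diffeo21}, so that by construction $\omega(v')+\omega(v'')=\omega(v'+v'')$ holds identically in $\sigma$. Writing~\eqref{cond:21} as
\begin{equation*}
g(v')+g(\psi(v',\sigma))=g(v'+\psi(v',\sigma)),
\end{equation*}
and differentiating with respect to $\sigma$ at $\sigma=0$ (here $g\in\CC^1$, so this is a genuine classical differentiation, avoiding the weak-formulation machinery of Section~\ref{sec2}), I would use $\psi(v',0)=0$, $\nabla\omega(0)=0$, and the chain rule. The term coming from differentiating $g(\psi(v',\sigma))$ produces $\nabla g(0)\,D_\sigma\psi(v',0)$, while the right-hand side gives $\nabla g(v')\,D_\sigma\psi(v',0)$. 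After passing to $\tilde g=g-\nabla g(0)\cdot v$, whose gradient is $\nabla g(v)-\nabla g(0)$, the constant contributions cancel and one is left with
\begin{equation*}
\nabla\tilde g(v')\,D_\sigma\psi(v',0)=0,\qquad \text{for all } v'\in\Rd.
\end{equation*}
Since $\rank(D_\sigma\psi(v',0))=d-1$ by Lemma~\ref{lem:diffeo21}, the columns of $D_\sigma\psi(v',0)$ span a hyperplane, so $\nabla\tilde g(v')$ is orthogonal to that hyperplane. Exactly as in the four-waves case, differentiating the defining relation $\omega(v')+\omega(\psi(v',\sigma))=\omega(v'+\psi(v',\sigma))$ shows that $\nabla\omega(v')$ is also orthogonal to the same hyperplane. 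Hence $\nabla\tilde g(v')$ and $\nabla\omega(v')$ are parallel, and since $\nabla\omega(v')\neq 0$ for $v'\neq 0$, there is a scalar $c(v')$ with $\nabla\tilde g(v')=c(v')\nabla\omega(v')$ on $\Rd$.

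\medskip

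It remains to deduce that $\tilde g$ factors through $\omega$. The relation $\nabla\tilde g=c\,\nabla\omega$ means $\tilde g$ is constant along the connected components of the level sets of $\omega$: indeed $\nabla\tilde g$ vanishes in every direction tangent to a level set $\{\omega=a\}$. The hypothesis that $\omega^{-1}(\{a\})$ is connected for every $a$ is precisely what upgrades this local statement into a well-defined function $\mu$ of the value $a=\omega(v)$, so that $\tilde g(v)=\mu(\omega(v))$ on $\Rd$; continuity of $\mu$ at $0$ follows from continuity of $\tilde g$ and the fact that $\omega(v)\to 0$ as $v\to 0$ with $\omega(0)=0$. The $\CC^1$ regularity of $\mu$ away from $0$ comes from the relation $c=\nabla\tilde g\cdot\nabla\omega/|\nabla\omega|^2$, which shows $c$ is continuous where $\nabla\omega\neq 0$, combined with identifying $\mu'(\omega(v))=c(v)$ along any curve transverse to the level sets.

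\medskip

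The main obstacle I anticipate is the passage from the infinitesimal parallelism $\nabla\tilde g\parallel\nabla\omega$ to the global factorization $\tilde g=\mu\circ\omega$, where the connectedness assumption on the level sets must be used carefully: one needs that the level sets of $\omega$ are genuine $(d-1)$-dimensional submanifolds on which $\tilde g$ is constant, and then that the map $a\mapsto$ (common value of $\tilde g$ on $\omega^{-1}(\{a\})$) is well-defined and inherits the right regularity. A secondary technical point is the behavior at the origin, where $\nabla\omega(0)=0$ and $\omega$ need not be $\CC^1$-regular in the factored variable, so the continuity of $\mu$ at $0$ (rather than $\CC^1$) is the best one can state, consistent with the statement of Proposition~\ref{prop:mu}.
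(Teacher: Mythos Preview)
Your proposal is correct and follows essentially the same route as the paper: differentiate the parametrized constraint from Lemma~\ref{lem:diffeo21} at $\sigma=0$ to get $\nabla\tilde g(v)$ and $\nabla\omega(v)$ both annihilated by the rank-$(d-1)$ map $D_\sigma\psi(v,0)$, deduce parallelism, then use connectedness of the level sets of $\omega$ to define $\mu$, and read off its regularity from the scalar factor $c(v)=\lambda_v$. The only cosmetic difference is that the paper first passes to $\tilde g$ (noting it satisfies the same functional equation with $\nabla\tilde g(0)=0$) before differentiating, whereas you differentiate with $g$ and cancel the $\nabla g(0)$ terms afterward; the paper also makes the continuity of $\mu$ at $0$ slightly more explicit by first recording $\tilde g(0)=0$ (from \eqref{cond:21} with $v'=v''=0$) and then using that small values $a$ of $\omega$ are attained at small $v$.
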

\begin{proof}
We first notice that assumption~\eqref{cond:21} is equivalent to having
\begin{equation*}
\omega(v)+\omega(z)=\omega(v+z) \quad\Rightarrow\quad  \tilde g(v)+\tilde g(z)=\tilde g(v+z),
\end{equation*}
for all $(v,z)\in \left(\R^d\right)^2$, the advantage being that we now have $\nabla\tilde g(0)=0$. Using Lemma~\ref{lem:diffeo21}, for all $v\in\Rd$, we obtain
\begin{equation*}
\omega(v)+\omega(\psi(v,\sigma))=\omega(v+\psi(v,\sigma)) \quad \text{and} \quad  \tilde g(v)+ \tilde g(\psi(v,\sigma))= \tilde g(v+\psi(v,\sigma)),
\end{equation*}
for all $\sigma$ in a neighborhood of $0$. By differentiating with respect to $\sigma$ and evaluating at $\sigma=0$, we obtain
\begin{equation*}
D\omega(v)D_\sigma\psi(v,0) = 0 \quad \text{and}\quad D\tilde g(v)D_\sigma\psi(v,0).
\end{equation*}
Since $\rank(D_\sigma\psi(v,0))=d-1$, we infer the existence of $\lambda_v\in\R$ such that
\begin{equation}
\label{eq:grad_col21}
D\tilde g(v) = \lambda_xD\omega(v).
\end{equation}
This yields that, for all $v\in\Rd$, $\tilde g$ is locally constant, and thus globally constant by connectedness, on $\omega^{-1}(\{\omega(v)\})$. Notice that we assumed that $\omega(v)\neq 0$ for all $v\neq 0$, so that
 $\omega^{-1}(\{\omega(0)\})={0}$, and $\tilde g$ is trivially constant on $\omega^{-1}(\{\omega(0)\})$. Hence, for all $a\in\omega(\R)$, $\tilde g$ is constant on $\omega^{-1}(\{a\})$, and we can define the function
\begin{equation*}
\mu : 
\left\{\begin{aligned}
 &\omega(\R) \to \R \\
 &a \mapsto  g\left(\omega^{(-1)}(\{a\})\right).
\end{aligned}\right.
\end{equation*} 

We now prove that $\mu$ indeed belongs to $\CC(\omega(\R),\R)\cap\CC^1(\omega(\R)\setminus\{0\},\R)$. Let $a\in\omega(\R)\setminus\{0\}$ and $v\in\R^d$ such that $\omega(v)=a$. Since $a\neq 0$, $v\neq 0$ and therefore $\nabla\omega(v)\neq 0$,  there exists $i\in\{1,\ldots,d\}$ such that $\partial_i\omega(v)\neq 0$. This yields that the function
\begin{equation*}
\gamma : 
\left\{\begin{aligned}
 &\R\to\R \\
 &t\mapsto \omega(v+te_i),
\end{aligned}\right.
\end{equation*} 
with $e_i$ the $i$-th canonical basis vector of $\R^d$, is a $\CC^1$ diffeomorphism in a neighborhood of $0$. In particular, for $h\in \R$ small enough, there exists a unique $t\in\R$ small enough such that $\gamma(0)+h=\gamma(t)$, i.e. $a+h=\omega(v+te_i)$. We get that $h\to 0 \Leftrightarrow t\to 0$, and more precisely
\begin{equation*}
h=\partial_i\omega(v)t+ o(t),
\end{equation*}
from which we infer
\begin{equation*}
t=(\partial_i\omega(v))^{-1}h + o(h).
\end{equation*}
Therefore, we get
\begin{align*}
\mu(a+h)-\mu(a)&=  g(v+te_i)- g(v) \\
&= \partial_i g(v) t + o(t) \\
&= \lambda_v h + o(h),
\end{align*}
which yields that $\lambda_v$ does not depend on $v$ for $v\in\omega^{-1}(\{a\})$, and that $\mu$ is differentiable at $a$. Besides, since both $ g$ and $\omega$ are assumed to be $\CC^1$, so is $v\mapsto\lambda_v$ thanks to~\eqref{eq:grad_col21}, and therefore $\mu$ is $\CC^1$ at point $a$. We have thus proven that $\mu\in\CC^1(\omega(\R)\setminus\{0\},\R)$. To show that $\mu$ is continuous at point $0$, first notice that assumption~\eqref{cond:21} with $v=z=0$ imposes that $\tilde g(0)=0$,
and thus $\mu(0)=0$. Next, for any $\varepsilon>0$, by continuity of $\tilde g$, there exists $\eta>0$ such that $\Vert v\Vert <\eta \Rightarrow \vert \tilde g(v)\vert <\varepsilon$. With $\delta :=\sup_{\Vert v\Vert <\eta} \omega(v)$, we see that, for all $a\in[0,\delta)$, there exists $v$ such that $\Vert v\Vert <\eta$ and $\omega(v)=a$. From $\tilde g(v)=\mu(\omega(v))$, we infer that $\mu(a)<\varepsilon$, therefore $\mu$ is indeed continuous at $0$.
\end{proof}

We are now ready to conclude the proof of Theorem~\ref{th:21}, by showing that the function $\mu$ is in fact linear.
\medskip

\noindent \textit{End of the proof of Theorem~\ref{th:21}.} 
Let $\ba\in\omega(\R)\setminus\{0\}$ and $\bx\in\Rd$ such that $\omega(\bx)=\ba$. Using Lemma~\ref{lem:diffeo21}, we get the existence of $\varepsilon_1>0$ such that, for all $v\in\R^d$ satisfying $\Vert v-\bx \Vert <\varepsilon_1$ and all $\sigma\in\R^{d-1}$ satisfying $\Vert \sigma \Vert<\varepsilon_1$,
\begin{equation*}
\omega(v)+\omega(\psi(v,\sigma))=\omega(v+\psi(v,\sigma)) \quad \text{and} \quad \tilde g(v)+\tilde g(\psi(\sigma))=\tilde g(v+\psi(v,\sigma)),
\end{equation*}
which yields, using Proposition~\ref{prop:mu},
\begin{equation*}
\mu(\omega(v))+\mu(\omega(\psi(v,\sigma)))=\mu(\omega(v)+\omega(\psi(v,\sigma))).
\end{equation*}
Since $\nabla\omega(\bx)\neq 0$, up to taking $\varepsilon_1$ small enough, there exists $\varepsilon_2>0$ such that, for all $a\in\omega(\R)$ satisfying $\vert a-\ba\vert<\varepsilon_2$, there exists $v\in\R^d$ satisfying $\Vert v-\bx\Vert< \varepsilon_1$ and $\omega(v)=a$.

That is, for all $a\in\omega(\R)$ satisfying $\vert a-\ba\vert<\varepsilon_2$, there exists $v\in\R^d$ satisfying $\Vert v-\bx\Vert< \varepsilon_1$, such that, for all $\sigma\in\R^{d-1}$ satisfying $\Vert \sigma \Vert<\varepsilon_1$,
\begin{equation*}
\mu(a)+\mu(\omega(\psi(v,\sigma)))=\mu(a+\omega(\psi(v,\sigma))).
\end{equation*}
Then, notice that for all $\sigma\neq 0$, we have $\psi(v,\sigma)\neq 0$, and therefore $\omega(\psi(v,\sigma)) \neq 0$, which yields by compactness that
\begin{equation*}
\zeta=\inf_{\Vert v-\bx\Vert< \varepsilon_1} \sup_{\Vert \sigma \Vert<\varepsilon_1} \omega(\psi(v,\sigma)) >0.
\end{equation*}
Hence, we can assume $0<\varepsilon_2<\zeta$ and obtain that for all $v\in\R^d$ satisfying $\Vert v-\bx\Vert< \varepsilon_1$, and all $h\in[0,\varepsilon_2)$, there exists $\sigma\in\R^{d-1}$ such that $\Vert \sigma \Vert<\varepsilon_1$ and $\omega(\psi(v,\sigma))=h$.

Therefore we see that for all $a\in\omega(\R)$ such that $\vert a-\ba\vert<\varepsilon_2$ and all $h\in[0,\varepsilon_2)$,
\begin{equation*}
\mu(a)+\mu(h)=\mu(a+h).
\end{equation*}
%
%
We then infer that, for all all $h\in[0,\varepsilon_2)$,
\begin{equation*}
\mu'(\ba)=\mu'(\ba+h).
\end{equation*}
Hence, $\mu'$ is constant on $\omega(\R)\setminus\{0\}$. Since $\mu(0)=0$, there exists $c\in \R$ such that, for all $a\in\omega(\R)$, $\mu(a)=c\,a$. Therefore $\tilde g(v)=c\,\omega(v)$ for all $v\in\R^d$, and Theorem \ref{th:21} is proven. \hfill $\qed$

\paragraph*{Acknowledgments:} The research leading to this paper was partly funded by Universit\'e Sorbonne Paris Cit\'e, in the framework of the ``Investissements d'Avenir'', 
convention ANR-11-IDEX-0005. MB also acknowledges partial support from a Lichtenberg Professorship grant of the VolkswagenStiftung awarded to C. Kuehn.

\end{document}